\newtheorem{theorem}{Theorem}[section]
\newtheorem{lemma}[theorem]{Lemma}
\newtheorem{cor}[theorem]{Corollary}
\newtheorem{prop}[theorem]{Proposition}
\theoremstyle{definition}
\newtheorem{definition}[theorem]{Definition}
\theoremstyle{remark}
\newtheorem{remark}[theorem]{Remark}
\numberwithin{equation}{section}
\newcommand\nutwid{\overset {\text{\lower 3pt\hbox{$\sim$}}}\nu}
\newcommand{\beqs}{\begin{equation*}}
\newcommand{\eeqs}{\end{equation*}}
\newcommand{\beq}{\begin{equation}}
\newcommand{\eeq}{\end{equation}}
\begin{document}
\title[On elliptic functions associated with even Dirichlet characters]
 {On a class of elliptic functions associated with the Even Dirichlet characters}

\author{Dandan Chen and Rong Chen}
\address{School of Mathematical Sciences, East China Normal University, Shanghai, People's Republic of China}
\address{Department of Mathematics, University of Florida, Gainesville, FL 32601}
\email{ddchen@stu.ecnu.edu.cn}

\address{School of Mathematical Sciences, East China Normal University, Shanghai, People's Republic of China}
\address{Department of Mathematics, University of Florida, Gainesville, FL 32601}
\email{rchen@stu.ecnu.edu.cn}

\subjclass[2010]{33E05 14H42 11M36 }
\date{\today}

\keywords{Elliptic function; Theta function; Eisenstein series.}


\maketitle
\begin{abstract}
We construct a class of companion elliptic functions associated with the even Dirichlet characters. Using the well-known properties of the classical Weierstrass elliptic function $\wp(z|\tau)$ as the blueprint, we will derive their representations in terms of $q$-series and partial fractions. We also explore the significance of the coefficients of their power series expansions and establish the modular properties under the actions of
the arithmetic groups $\Gamma_0(N)$ and $\Gamma_1(N)$.
 \end{abstract}
 \section{Introduction}\label{section1}

We will consider a pair of companion elliptic functions generated from the twisting of the logarithmic derivative of the Jacobi theta function $\theta_1(z|\tau)$ by the even Dirichlet characters over certain subgroups of the period lattice.
We first mention some familiar properties between the Dirichlet characters and the Jacobi theta functions which can be found in standard literature.

 Let $N$ be a positive integer and $\chi$ be a Dirichlet character modulo $N$. It is extended to the set of integers $\mathbb Z$.  For all integers $m$ and $n$, it satisfies the following properties:
\begin{itemize}
\item[(1)] $\chi(1)=1$,

\item[(2)] $\chi(n+N)=\chi(n)$,

\item[(3)] $\chi(mn)=\chi(m)\chi(n)$,

\item[(4)] $\chi(n)=0$ if $\gcd (n,N)>1$.
\end{itemize}
We say $\chi$ is even if $\chi(-1)=1$ and odd if $\chi(-1)=-1$.

Let $N'$ be a positive integer which is divisible by $N$. For any character $\chi$ modulo $N$, we can form a character $\chi'$ modulo $N'$
as follows:
\begin{align*}
\chi'(k)= \begin{cases} \chi(k) \quad & \text{if}  \gcd (k,N')=1, \\[0.1in]
       0 \quad & \text{if} \gcd (k,N')>1.
\end{cases}
\end{align*}
We say that $\chi'$ is induced by the character $\chi$.

Let $\chi$ be a character modulo $N$. If there is a proper divisor $d$ of $N$ and a character modulo $d$ which induces $\chi$, then the character $\chi$ is said to be non-primitive, otherwise it is called primitive.

 Define the Gauss sum
 \begin{align*}
 g_n(\chi)=\sum_{k=1}^{N-1}\chi(k)e^{2i\pi nk/N}.
 \end{align*}
From \cite[p. 334]{AI66}, if $\chi$ is primitive, then
\begin{align*}
g_n(\chi)=\overline{\chi(n)}g_1(\chi).
\end{align*}

\begin{definition}\label{theta-defn} \emph{(Cf.\ \cite[p. 166]{HR}) Jacobi theta functions $\theta_j$  for $j = 1, 2, 3,4$ are defined as,}
\begin{align*}
\theta_1(z|\tau)&=2q^{1/8}\sum_{n=0}^\infty(-1)^nq^{n(n+1)/2}\sin(2n+1)z,
\quad\quad\theta_3(z|\tau)=1+2\sum_{n=1}^\infty q^{n^2/2}\cos2nz,\\
\theta_2(z|\tau)&=2q^{1/8}\sum_{n=0}^\infty q^{n(n+1)/2}\cos(2n+1)z,
\quad\quad\theta_4(z|\tau)=1+2\sum_{n=1}^\infty (-1)^nq^{n^2/2}\cos2nz;
\end{align*}
where $q=\exp(2\pi  i \tau)$ with  $\Im\tau>0$.
\end{definition}

The infinite product representations of theta functions are given by the following proposition.

\begin{prop} (Cf.\ \cite[p.\ 131]{Liu01}) Let $\theta_j$  for $j = 1, 2, 3, 4$ be defined as in Definition \ref{theta-defn}. Then we have
\begin{align*}
\theta_1(z|\tau)&=2q^{1/8}(\sin z)
(q;q)_\infty(qe^{-2iz} ;q)_\infty(qe^{2iz};q)_\infty,\\
\theta_2(z|\tau)&=2q^{1/8}(\cos z)
(q;q)_\infty(-qe^{-2iz};q)_\infty(-qe^{2iz};q)_\infty,\\
\theta_3(z|\tau)&=(q;q)_\infty
(-q^{1/2}e^{-2iz};q)_\infty(-q^{1/2}e^{2iz};q)_\infty,\\
\theta_4(z|\tau)&=(q;q)_\infty(q^{1/2}e^{-2iz};q)_\infty
(q^{1/2}e^{2iz};q)_\infty.
\end{align*}
\end{prop}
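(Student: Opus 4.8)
The plan is to deduce all four product representations from a single source, the Jacobi triple product identity
\[
\sum_{n=-\infty}^{\infty} w^{n} q^{n^{2}/2} = (q;q)_\infty\,(-wq^{1/2};q)_\infty\,(-w^{-1}q^{1/2};q)_\infty,
\]
valid for $w\in\mathbb{C}\setminus\{0\}$ and $|q|<1$. First I would establish this identity. The cleanest route is to fix $q$ and regard the left-hand side as an analytic function $f(w)$ on $\mathbb{C}\setminus\{0\}$, verify the functional equation $f(qw)=(q^{1/2}w)^{-1}f(w)$ by the index shift $n\mapsto n-1$, and note that the product on the right satisfies the same functional equation and has the same simple zeros, namely $w=-q^{1/2+k}$ for $k\in\mathbb{Z}$. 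A Liouville-type argument then forces the ratio of the two sides to be a constant $C(q)$ independent of $w$, and $C(q)=(q;q)_\infty$ is pinned down by specializing $w$ (for instance $w=i$) and solving the resulting functional equation in $q$. I expect this to be the main obstacle, since it carries the genuine analytic content; everything afterward is substitution and bookkeeping.

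Second, $\theta_{3}$ and $\theta_{4}$ follow immediately. Writing $\cos 2nz=\tfrac12(e^{2inz}+e^{-2inz})$ recasts the defining series as the bilateral sums $\theta_{3}(z|\tau)=\sum_{n\in\mathbb{Z}}q^{n^{2}/2}e^{2inz}$ and $\theta_{4}(z|\tau)=\sum_{n\in\mathbb{Z}}(-1)^{n}q^{n^{2}/2}e^{2inz}$. Taking $w=e^{2iz}$ in the triple product yields the formula for $\theta_{3}$ verbatim, while $w=-e^{2iz}$ produces the alternating sign and flips $-q^{1/2}e^{\pm2iz}$ into $+q^{1/2}e^{\pm2iz}$, giving the formula for $\theta_{4}$.

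Third, for $\theta_{1}$ and $\theta_{2}$ I would first absorb the prefactor using $q^{1/8}q^{n(n+1)/2}=q^{(2n+1)^{2}/8}$, so that the half-integer frequencies become natural exponents, and then let the odd index $2n+1$ range over all odd integers to obtain a bilateral sum supported on odd $k$ of the form $\sum_{k\ \mathrm{odd}}q^{k^{2}/8}x^{k}$. Setting $k=2j+1$ turns this into $q^{1/8}x\sum_{j\in\mathbb{Z}}q^{j^{2}/2}(q^{1/2}x^{2})^{j}$, a specialization of the triple product with $w=q^{1/2}x^{2}$. For $\theta_{2}$ one has $x=e^{iz}$, the product factors collapse to $(-qe^{2iz};q)_\infty(-qe^{-2iz};q)_\infty$, and the leftover low-order factor $1+e^{-2iz}$ combines with the prefactor $e^{iz}$ to give $2\cos z$. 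The delicate point is $\theta_{1}$: here the alternating coefficient $(-1)^{n}$ does not extend evenly across negative odd $k$, and one checks that the correct bilateral coefficient is $-i\cdot i^{k}$, which is precisely what the choice $x=ie^{iz}$ absorbs into $w$; the residual factor $1-e^{-2iz}$ then combines with $-ie^{iz}$ to yield exactly $2\sin z$, reproducing the stated formula.

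Alternatively, and more in keeping with the elliptic-function viewpoint of this paper, one can bypass the series juggling for $\theta_{1}$ by characterizing $\theta_{1}(z|\tau)$ as the essentially unique entire function with simple zeros exactly on the lattice $\pi\mathbb{Z}+\pi\tau\mathbb{Z}$ and the prescribed quasi-periodicity $\theta_{1}(z+\pi)=-\theta_{1}(z)$, $\theta_{1}(z+\pi\tau)=-q^{-1/2}e^{-2iz}\theta_{1}(z)$. The product $2q^{1/8}\sin z\,(q;q)_\infty(qe^{-2iz};q)_\infty(qe^{2iz};q)_\infty$ has precisely these zeros and transforms the same way, so the two agree up to a factor depending on $\tau$ alone, which is fixed by matching the derivative at $z=0$. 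The remaining three identities then follow from the standard shifts $\theta_{2}(z)=\theta_{1}\!\left(z+\tfrac{\pi}{2}\right)$, $\theta_{4}(z)=\theta_{3}\!\left(z+\tfrac{\pi}{2}\right)$, and the quasi-period shift by $\tfrac{\pi\tau}{2}$ relating the two pairs, each of which transforms the right-hand product in the required manner.
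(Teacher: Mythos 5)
The paper does not prove this proposition at all: it is stated with a pointer to the literature (Liu's paper, via the citation), so any comparison is between your self-contained argument and a bare reference. Your route through the Jacobi triple product identity $\sum_{n\in\mathbb{Z}}w^nq^{n^2/2}=(q;q)_\infty(-wq^{1/2};q)_\infty(-w^{-1}q^{1/2};q)_\infty$ is the standard one and it is correct: the specializations $w=e^{2iz}$ and $w=-e^{2iz}$ give $\theta_3$ and $\theta_4$ immediately, and the reindexing over odd integers with $w=q^{1/2}x^2$ handles $\theta_1$ and $\theta_2$, with the low-order factor $(1\pm e^{-2iz})$ supplying $2\cos z$ and $2\sin z$. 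Your sketch of the triple product itself (functional equation $f(qw)=(q^{1/2}w)^{-1}f(w)$, matching zeros, Liouville on the annulus, then pinning $C(q)=(q;q)_\infty$ by a special value) is the genuine analytic content and is sound, provided you actually verify that the bilateral sum vanishes at $w=-q^{1/2}$ (the pairing $n\leftrightarrow -n-1$ does this) before invoking the zero-matching. One bookkeeping slip: for $\theta_1$ the bilateral coefficient of $q^{k^2/8}e^{ikz}$ over odd $k$ is $-i^k$, not $-i\cdot i^k$; equivalently $\theta_1=-\sum_{k\,\mathrm{odd}}q^{k^2/8}(ie^{iz})^k$ with overall constant $-1$, and then $(-1)\cdot ie^{iz}(1-e^{-2iz})=2\sin z$ exactly as you conclude, so the error is only in the stated intermediate coefficient and not in the final identity. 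Your alternative characterization of $\theta_1$ by its zeros and quasi-periods is also a legitimate route, but it quietly relies on the same uniqueness principle as the triple product proof, so it does not save work; if you use it, be explicit that the normalizing factor depending on $\tau$ alone is fixed by comparing $\theta_1'(0|\tau)$ with the derivative of the product at $z=0$, which is where the constant $(q;q)_\infty^3$ of equation \eqref{theta1-drive-0} enters.
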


Here and later we use the standard $q-$series notation and $q=\exp(2\pi  i \tau)$ with  $\Im\tau>0$:
\begin{align*}
(a;q)_0=1, \quad (a;q)_n=\prod_{k=0}^{n-1}(1-aq^k), \quad (a;q)_\infty=\prod_{k=0}^\infty(1-aq^k).
\end{align*}

To motivate the main theme of the paper, we begin by recalling the  properties of the Weierstrass elliptic function $\wp(z|\tau)$ which will be served as blueprint for the rest of the work and the notation $\frac{\theta'_1}{\theta_1}(z|\tau)$ is an abbreviation for $\frac{\theta'_1(z|\tau)}{\theta_1(z|\tau)}$:

\begin{align}\label{wp-defn}
\wp(z|\tau)=\frac {1}{z^2}+\sum_{\substack{m,n=-\infty\\ (m,n)\neq(0,0)}}^{\infty}\left(\frac{1}{(z+m\pi+n\pi\tau)^2}-\frac{1}{(m\pi+n\pi\tau)^2}\right),
\end{align}
\begin{align*}
\frac{1}{\tau^2}\wp\left(\frac z\tau|-\frac 1\tau\right)=\wp(z|\tau),
\end{align*}
\begin{align}
\wp(z|\tau)&=-\left(\frac{\theta'_1}{\theta_1}\right)'(z|\tau)-\frac 13E_2(\tau)\label{theta-wp}\\
&=\csc^2z-8\sum_{n=1}^{\infty}\frac{nq^n}{1-q^n}\cos 2nz-\frac13E_2(\tau)\nonumber,
\end{align}
where
\begin{align*}
E_2(\tau)=1-24\sum_{n=1}^{\infty}\frac{nq^n}{1-q^n}.
\end{align*}
Moreover, there is  a well-known power series expansion at $z=0$ \cite[p. 11]{Apostol}:
\begin{align*}
\wp(z|\tau)=\frac{1}{z^2}+\sum_{k=1}^{\infty}(2k+1)E_{2k+2}(\tau)z^{2k};
\end{align*}
where
\begin{align*}
E_{2k}(\tau)=\frac{1}{\pi^{2k}}\sum_{\substack{m,n=-\infty\\ (m,n)\neq(0,0)}}^{\infty}\frac{1}{(m+n\tau)^{2k}}
\end{align*}
and the Eisenstein series $E_{2k}(\tau), k=2,3,4\cdots$ are modular forms of the full modular group $SL(2,\mathbb{Z})$.

Let $\chi$ be an even Dirichlet character modulo $N$. We now describe the elliptic functions referred at the begining. Define
\begin{align*}
g(z|\tau;\chi):=\sum_{k=1}^{N-1}\chi(k)\frac{\theta'_1}{\theta_1}(z+k\pi\tau|N\tau).
\end{align*}
In view of  the properties of theta functions, we show that it has a companion elliptic function:
\begin{align*}
\frac 1\tau g\left(\frac z\tau|-\frac{1}{N\tau}; \chi \right)
=\sum_{k=1}^{N-1}\chi(k)\frac{\theta'_1}{\theta_1}\left(z+\frac{k\pi}{N}|\tau\right).
\end{align*}
We will derive formulas analogues to that of Weierstrass elliptic function mentioned above. In particular, for the coefficients of power series expansion of $g(z|\tau;\chi)$ and $g\left(\frac z\tau|-\frac{1}{N\tau};\chi\right)$ at $z=0$, we will derive the Eisenstein series and Lambert series representations for these coefficients and prove that they are modular forms of the arithmetic group $\Gamma_1(N)$.

At the end, in conjunction with the following theta function identity \cite[Corollary 2]{MC-Shen}:
\begin{align}\label{four-variable}
&\frac{\theta'_1}{\theta_1}(x_1|\tau)+\frac{\theta'_1}{\theta_1}(x_2|\tau)+\frac{\theta'_1}{\theta_1}(x_3|\tau)-\frac{\theta'_1}{\theta_1}(x_1+x_2+x_3|\tau)\nonumber\\
=&\frac{\theta_1^{'}(0|\tau)\theta_1(x_1+x_2|\tau)\theta_1(x_1+x_3|\tau)\theta_1(x_2+x_3|\tau)}
{\theta_1(x_1|\tau)\theta_1(x_2|\tau)\theta_1(x_3|\tau)\theta_1(x_1+x_2+x_3|\tau)},
\end{align}
we will obtain, among other things, a set of the product representations for the Lambert series corresponding to these $q$-series for the cases $N=5, 8, 10$ and $12$.

For later use, we list the following  facts \cite[p.\ 463, p.\ 468]{ET66}:
\begin{align}
&\theta_1(z|\tau)
= iq^{1/8}e^{- iz}(q;q)_\infty(e^{2 iz};q)_\infty(qe^{-2 iz};q)_\infty\label{theta1-infty-1},\\
&\theta_1\bigg(\frac z\tau\bigg|-\frac1\tau\bigg)
=\frac 1{ i}\sqrt{- i\tau}e^{\frac{ iz^2}{\pi\tau}}\theta_1(z|\tau)\label{theta1-imagy},\\
&\theta_4\bigg(\frac z\tau\bigg|-\frac1\tau\bigg)
=\sqrt{- i\tau}e^{\frac{ iz^2}{\pi\tau}}\theta_2(z|\tau)\label{theta4-imagy},\\
&\theta'_1(0|\tau)=2q^{1/8}(q;q)^3\label{theta1-drive-0}.
\end{align}
The Dedekind eta-function is defined as
$\eta(\tau)=q^{{1}/{24}}(q;q)_\infty$.
It satisfies the imaginary transformation:
\begin{align}\label{eta-imagy}
\eta\bigg(-\frac1\tau\bigg)=\sqrt{- i\tau}\eta(\tau).
\end{align}

It is perhaps worthwhile to comment that our work is originally motivated by the identities appeared in
\cite[Eq.\ (4.8)]{Liu14} and \cite[Eq. (5.8)]{Liu07}:
\begin{align*}
&\sum_{n=1}^{\infty}\frac{q^n-q^{5n}-q^{7n}+q^{11n}}{1-q^{12n}}\sin2nz
=\frac{\eta(2\tau)\eta(4\tau)\eta^3(6\tau)\theta_1(z|3\tau)\theta_1(2z|12\tau)}{2\eta(3\tau)\eta^2(12\tau)\theta_1(z|6\tau)\theta_4(z|2\tau)},\\
&\sum_{n=1}^\infty\frac{q^n-q^{2n}-q^{3n}+q^{4n}}{1-q^{5n}}\sin 2nz
=-\frac{q^{-1/8}(q;q)^2_\infty}{2(q^5;q^5)_\infty}\frac{\theta_1(z|5\tau)\theta_1(2z|5\tau)}{\theta_1(z|\tau)},
\end{align*}
and from which we are led to the considering the following generalization of the above sums:
\begin{align*}
g(z|\tau;\chi)=4\sum_{n=1}^{\infty}\frac{\sum_{k=1}^{N-1}\chi(k)q^{kn}}{1-q^{Nn}}\sin 2nz;
\end{align*}
where $\chi$ is an even Dirichlet character modulo $N$.

Notably,  Kolberg \cite{kol}  had also investigates the function $g(z|\tau;\chi)$ and $g\left(\frac z\tau|-\frac {1}{N\tau};\chi\right)$. But the methods are different. His approach is based on the identity:
\begin{align*}
\frac{\theta^{\prime}_1}{\theta_1}\left(z|\frac{a\tau+b}{c\tau+d}\right)
=2 izc(c\tau+d)/\pi+(c\tau+d)\frac{\theta^{\prime}_1}{\theta_1}((c\tau+d)z|\tau)
\end{align*}
and one of his aims is to construct the multipliers involving the products of the Dedekind eta function of the forms $\eta^a(\tau)\eta^b(p\tau)$ so as to make, for odd prime $p$, $\eta^a(\tau)\eta^b(p\tau)E_k(\tau,\chi)$
and $\eta^a(\tau)\eta^b(p\tau)F_k(\tau,\chi)$ automorphic under the action of $\Gamma_0(p)$.
Whereas, we exploit fully the properties of Weierstrass elliptic function $\wp(z|\tau)$ and explore the elliptic aspects of $g(z|\tau;\chi)$ and
$g\left(\frac z\tau|-\frac {1}{N\tau};\chi\right)$. We also bring out their connection with the Weierstrass elliptic function $\wp(z|\tau)$ and determine precisely the transformation formulas for the corresponding Eisenstein series associated with the even character $\chi$ under the action of $\Gamma_0(N)$.

\section{Main results}\label{section2}

Let $\chi$ be a Dirichlet character modulo $N$. Define
 \begin{align*}
 Q_{\chi,N}(t)=\sum_{k=0}^{N-1}\chi(k)\frac{t^k}{1-t^N}.
 \end{align*}


 \begin{theorem}\label{g-transform}Suppose $\chi$ is an  even Dirichlet character modulo $N$. Then
\begin{align*}
g(z|\tau;\chi)
=4\sum_{n=1}^\infty\frac{\sum_{k=1}^{N-1}\chi(k)q^{kn}}{1-q^{Nn}}\sin {2nz}.
\end{align*}
\end{theorem}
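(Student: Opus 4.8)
The plan is to reduce the whole identity to the single--variable $q$--expansion of $\theta_1'/\theta_1$ and then match Fourier coefficients in the variable $x:=e^{2iz}$. First I would extract from \eqref{theta-wp} the expansion
\begin{align*}
\frac{\theta_1'}{\theta_1}(z|\tau)=\cot z+4\sum_{n=1}^{\infty}\frac{q^n}{1-q^n}\sin 2nz .
\end{align*}
Indeed, the two displayed lines of \eqref{theta-wp} give $-(\theta_1'/\theta_1)'(z|\tau)=\csc^2 z-8\sum_{n\ge1}\frac{nq^n}{1-q^n}\cos 2nz$; integrating in $z$ yields the claimed formula up to an additive constant, and that constant vanishes because $\theta_1'/\theta_1$ is odd in $z$ (equivalently, by matching the principal part $1/z$ at $z=0$, since $\cot z=1/z+O(z)$).

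Next, replacing $z$ by $z+k\pi\tau$ and $\tau$ by $N\tau$, so that the nome becomes $q^N$, each summand of $g(z|\tau;\chi)$ reads
\begin{align*}
\frac{\theta_1'}{\theta_1}(z+k\pi\tau|N\tau)=\cot(z+k\pi\tau)+4\sum_{n=1}^{\infty}\frac{q^{Nn}}{1-q^{Nn}}\sin\big(2n(z+k\pi\tau)\big).
\end{align*}
Working in a horizontal strip about the real axis, so that $|e^{2i(z+k\pi\tau)}|<1$ for every $k\ge1$ (the binding constraint being $\Im z>-\pi\Im\tau$), I would expand in $x$ via
\begin{align*}
\cot(z+k\pi\tau)=-i-2i\sum_{m=1}^{\infty}x^m q^{mk},\qquad \sin\big(2n(z+k\pi\tau)\big)=\frac{x^nq^{nk}-x^{-n}q^{-nk}}{2i}.
\end{align*}

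Then I would sum against $\chi(k)$ and collect the coefficient of each power of $x$. The constant term is $-i\sum_{k=1}^{N-1}\chi(k)$, which vanishes by orthogonality (the even characters of interest are nonprincipal). For $n\ge1$ the coefficient of $x^{n}$ combines the $\cot$ and $\sin$ contributions into a single denominator,
\begin{align*}
-2i\sum_{k=1}^{N-1}\chi(k)q^{nk}\Big(1+\tfrac{q^{Nn}}{1-q^{Nn}}\Big)=-2i\,\frac{\sum_{k=1}^{N-1}\chi(k)q^{nk}}{1-q^{Nn}},
\end{align*}
whereas the coefficient of $x^{-n}$ comes only from the $\sin$ piece and equals $2i\sum_{k}\chi(k)\frac{q^{n(N-k)}}{1-q^{Nn}}$. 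Here the evenness $\chi(N-k)=\chi(-k)=\chi(k)$ lets me re-index $k\mapsto N-k$ and rewrite this as $2i\frac{\sum_k\chi(k)q^{nk}}{1-q^{Nn}}$, so the $x^{n}$ and $x^{-n}$ coefficients are negatives of one another. Recombining, $g(z|\tau;\chi)=4\sum_{n\ge1}\frac{\sum_k\chi(k)q^{nk}}{1-q^{Nn}}\cdot\frac{x^n-x^{-n}}{2i}$, which is the asserted series since $\sin 2nz=(x^n-x^{-n})/(2i)$; matching Laurent coefficients on the strip identifies the two meromorphic functions, and the identity extends by analytic continuation.

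The main obstacle is the bookkeeping between the cotangent's one--sided (nonnegative--frequency) expansion and the two--sided Lambert contribution: the positive frequencies must fuse the $\cot$ and $\sin$ pieces into the clean denominator $1-q^{Nn}$, while the negative frequencies arise solely from the $\sin$ piece and only produce the correct numerator $\sum_k\chi(k)q^{nk}$ after invoking $\chi(-1)=1$. The two remaining points to watch are the vanishing of the constant term $-i\sum_{k=1}^{N-1}\chi(k)$ and the choice of a region of $z$ in which all the geometric expansions converge simultaneously.
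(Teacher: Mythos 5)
Your proposal is correct and follows essentially the same route as the paper: both expand each summand $\frac{\theta_1'}{\theta_1}(z+k\pi\tau|N\tau)$ as a two-sided Laurent series in $e^{2iz}$, sum against $\chi(k)$, and use $\chi(-1)=1$ to fold the negative frequencies onto the positive ones (the paper records this as the antisymmetry $Q_{\chi,N}(q^{-n})=-Q_{\chi,N}(q^{n})$). The only cosmetic difference is your starting point: the paper begins from the already-fused expansion $i\frac{\theta_1'}{\theta_1}(z|\tau)=1+2\sum_{n\neq 0}\frac{e^{2inz}}{1-q^{n}}$, whereas you reassemble that same series by hand from the cotangent-plus-Lambert form, which is exactly the bookkeeping you flag as the main obstacle.
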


\begin{proof}
The proof is based on the identity:
\begin{align}\label{theta1-deriv}
\mathrm{i}\frac{\theta'_1}{\theta_1}(z|\tau)=1+2\sum_{n\in\mathbb{Z}\setminus\{0\}}\frac{e^{2  inz}}{1-q^n},
\end{align}
where the complex number $z$ satisfies the requirement: $|q|<|e^{2  iz}|<1.$\\

From the fact that $\chi$ is even, we derive the following identity:
\begin{align*}
Q_{\chi,N}(q^{-n})&=\frac{\sum_{k=1}^{N-1}\chi(k)q^{-kn}}{1-q^{-Nn}}\\
&=-\frac{\sum_{k=1}^{N-1}\chi(k)q^{n(N-k)}}{1-q^{Nn}}\\
&=-\frac{\sum_{k=1}^{N-1}\chi(-k)q^{n(N-k)}}{1-q^{Nn}}\\
&=-\frac{\sum_{k=1}^{N-1}\chi(N-k)q^{n(N-k)}}{1-q^{Nn}}\\
&=-\frac{\sum_{k=1}^{N-1}\chi(k)q^{kn}}{1-q^{Nn}}\\
&=-Q_{\chi,N}(q^n).
\end{align*}
Replacing $\tau$ with $N\tau$ and $z$ with $z+k\pi\tau$ in \eqref{theta1-deriv}, then it becomes
\begin{align*}
 i\frac{\theta'_1}{\theta_1}\left(z+k\pi\tau|N\tau\right)
=1+2\sum_{n\in\mathbb{Z}\setminus\{0\}}e^{2 inz}\frac{q^{kn}}{1-q^{Nn}}.
\end{align*}

 Hence
 \begin{align*}
  i \sum_{k=1}^{N-1}\chi(k)\frac{\theta'_1}{\theta_1}\left(z+k\pi\tau|N\tau\right)
 &=2\sum_{n\in\mathbb{Z}\setminus\{0\}}Q_{\chi,N}(q^n)e^{2 inz}\\
 &=2\sum_{n=1}^{\infty}Q_{\chi,N}(q^n)e^{2 inz}+ 2\sum_{n=-1}^{-\infty}Q_{\chi,N}(q^n)e^{2 inz}\\
 &=2\sum_{n=1}^{\infty}Q_{\chi,N}(q^n)e^{2 inz}+ 2\sum_{n=1}^{\infty}Q_{\chi,N}(q^{-n})e^{-2 inz}\\
 &=2\sum_{n=1}^{\infty}Q_{\chi,N}(q^n)e^{2 inz}- 2\sum_{n=1}^{\infty}Q_{\chi,N}(q^{n})e^{-2 inz}\\
 &=4 i\sum_{n=1}^{\infty}Q_{\chi,N}(q^n)\sin 2nz.
 \end{align*}
 This establishes the desired identity.
 \end{proof}

 We next derive the following companion identity which can be regarded as the imaginary transformation of $g(z|\tau;\chi)$.\\

 \begin{theorem}\label{g-transform-imagy0} Suppose $\chi$ is an even Dirichlet character modulo $N$.
Then,
\begin{align*}
\frac1\tau g\bigg(\frac z\tau\bigg|-\frac 1{N\tau};\chi\bigg)
=\sum_{k=1}^{N-1}\chi(k)\cot(z-\frac{k\pi}N)+4
\sum_{n=1}^\infty\frac{g_n(\chi)q^n}{1-q^n}\sin2nz.
\end{align*}
\end{theorem}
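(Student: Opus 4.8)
The plan is to reduce the claim to the companion elliptic function identity recorded in the introduction,
\[\frac1\tau g\Bigl(\frac z\tau\Bigm|-\frac1{N\tau};\chi\Bigr)=\sum_{k=1}^{N-1}\chi(k)\frac{\theta'_1}{\theta_1}\Bigl(z+\frac{k\pi}N\Bigm|\tau\Bigr),\]
and then to expand its right-hand side via a closed $q$-series formula for $\theta'_1/\theta_1$. (This companion identity itself follows from the definition of $g$ and the imaginary transformation \eqref{theta1-imagy}: taking the logarithmic derivative of \eqref{theta1-imagy} gives $\frac{\theta'_1}{\theta_1}\bigl(\tfrac w\tau\bigm|-\tfrac1\tau\bigr)=\tfrac{2iw}\pi+\tau\frac{\theta'_1}{\theta_1}(w|\tau)$; putting $w=z-k\pi/N$, summing against $\chi(k)$ and dividing by $\tau$, the linear terms drop out because $\sum_k\chi(k)=\sum_k k\chi(k)=0$ for a nontrivial even $\chi$, while the substitution $k\mapsto N-k$ together with the $\pi$-periodicity of $\theta'_1/\theta_1$ replaces $z-k\pi/N$ by $z+k\pi/N$.)

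The key analytic input is the expansion
\[\frac{\theta'_1}{\theta_1}(z|\tau)=\cot z+4\sum_{n=1}^\infty\frac{q^n}{1-q^n}\sin 2nz,\]
which I would derive by taking the logarithmic derivative in $z$ of the product representation $\theta_1(z|\tau)=2q^{1/8}(\sin z)(q;q)_\infty(qe^{-2iz};q)_\infty(qe^{2iz};q)_\infty$: the factor $\sin z$ contributes $\cot z$, and expanding $\log(qe^{\pm2iz};q)_\infty$ as a double series and summing the geometric series in the product index produces the Lambert sum. Substituting $z\mapsto z+k\pi/N$, multiplying by $\chi(k)$ and summing over $1\le k\le N-1$ then splits the right-hand side into a cotangent part $\sum_k\chi(k)\cot(z+k\pi/N)$ and a Lambert part $4\sum_{n\ge1}\tfrac{q^n}{1-q^n}\sum_k\chi(k)\sin(2nz+2nk\pi/N)$.

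For the cotangent part I would apply $k\mapsto N-k$, use $\chi(N-k)=\chi(-k)=\chi(k)$ and the $\pi$-periodicity $\cot(z+\pi-k\pi/N)=\cot(z-k\pi/N)$ to rewrite it as $\sum_k\chi(k)\cot(z-k\pi/N)$, matching the first term of the claim. For the Lambert part the crux is the character-sum evaluation: expanding $\sin(2nz+2nk\pi/N)=\sin2nz\cos\frac{2nk\pi}N+\cos2nz\sin\frac{2nk\pi}N$, the substitution $k\mapsto N-k$ together with $\sin(2n\pi-\theta)=-\sin\theta$ forces $\sum_k\chi(k)\sin\frac{2nk\pi}N=0$, so the $\cos2nz$ contribution drops out, while $\sum_k\chi(k)\cos\frac{2nk\pi}N$ is exactly the real part of the Gauss sum $g_n(\chi)=\sum_{k=1}^{N-1}\chi(k)e^{2i\pi nk/N}$; its imaginary part having just been shown to vanish, this sum equals $g_n(\chi)$ itself. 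This turns the Lambert part into $4\sum_{n\ge1}\tfrac{g_n(\chi)q^n}{1-q^n}\sin2nz$, completing the identity.

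The computation is essentially elementary once the product-formula expansion is in hand; the only place demanding care is the systematic use of the evenness of $\chi$ (via $k\mapsto N-k$), which is what simultaneously kills the imaginary part of $g_n(\chi)$, collapses the $\cos2nz$ term, and reconciles the $z\pm k\pi/N$ arguments in the cotangent sum. I expect this bookkeeping, rather than any deep analytic point, to be the main thing to get right.
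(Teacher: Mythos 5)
Your proposal follows essentially the same route as the paper: reduce to the companion identity $\frac1\tau g(\frac z\tau|-\frac1{N\tau};\chi)=\sum_k\chi(k)\frac{\theta'_1}{\theta_1}(z-\frac{k\pi}N|\tau)$ via the logarithmic derivative of the imaginary transformation (with the linear terms killed by evenness of $\chi$), then expand with $\frac{\theta'_1}{\theta_1}(z|\tau)=\cot z+4\sum_{n\ge1}\frac{q^n}{1-q^n}\sin 2nz$ and use the $k\mapsto N-k$ symmetry to collapse the character sums to $g_n(\chi)$. The only cosmetic differences are that you derive the Lambert expansion from the product formula where the paper cites Whittaker--Watson, and your sign bookkeeping passes through $z+k\pi/N$ before reindexing back; the argument is correct.
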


\begin{proof}
Differentiating the imaginary transformation \eqref{theta1-imagy} logarithmically, we observe that
\begin{align*}
\frac{\theta'_1}{\theta_1}\bigg(\frac z\tau\bigg|\frac{-1}{\tau}\bigg)
=\tau \frac{\theta'_1}{\theta_1}(z|\tau)+\frac{2 iz}{\pi}.
\end{align*}

Then
\begin{align*}
g\bigg(z\bigg|\frac{-1}{N\tau};\chi\bigg)
=\tau\sum_{k=1}^{N-1}\chi(k)\frac{\theta'_1}{\theta_1}\bigg(z\tau-\frac{k\pi}{N}\bigg|\tau\bigg)
  +\frac{2 ih_{\chi}}{N}.
\end{align*}

Since $\chi$ is an even Dirichlet character modulo $N$, it is easy to verify that $h_{\chi}=0$ and we have
\begin{align*}
\frac{1}{\tau}g\bigg(\frac z\tau\bigg|\frac{-1}{N\tau};\chi\bigg)
=\sum_{k=1}^{N-1}\chi(k)\frac{\theta'_1}{\theta_1}\bigg(z-\frac{k\pi}{N}\bigg|\tau\bigg).
\end{align*}
Recall the identity (see \cite[p.\ 489]{ET66})
\begin{align*}
\frac{\theta'_1}{\theta_1}(z|\tau)
=\cot z+4\sum_{n=1}^{\infty}\frac{q^n}{1-q^n}\sin 2nz.
\end{align*}
Then
\begin{align*}
\frac{1}{\tau}g\bigg(\frac z\tau\bigg|\frac{-1}{N\tau};\chi\bigg)
=\sum_{k=1}^{N-1}\chi(k)\cot\bigg(z-\frac{k\pi}{N}\bigg)
+4\sum_{n=1}^{\infty}\frac{q^n}{1-q^n}\sum_{k=1}^{N-1}\chi(k)\sin{2n(z-\frac{k\pi}{N})};
\end{align*}
and we note
\begin{align*}
\sum_{k=1}^{N-1}\chi(k)\sin{2n(z-\frac{k\pi}{N})}
=\sum_{k=1}^{N-1}\chi(k)\cos{\frac{2kn\pi}{N}}\sin{2nz}-\sum_{k=1}^{N-1}\chi(k)\sin{\frac{2kn\pi}{N}}\cos{2nz}.
\end{align*}

Since $\chi$ is even, we have
\begin{align*}
\sum_{k=1}^{N-1}\chi(k)\sin{\frac{2kn\pi}{N}}=0
\end{align*}
and
\begin{align*}
\sum_{k=1}^{N-1}\chi(k)\cos{\frac{2kn\pi}{N}}=
\sum_{k=1}^{N-1}\chi(k)e^{i\frac{2kn\pi}{N}}=g_n(\chi).
\end{align*}
Thus, if $\chi$ is even,
\begin{align*}
\frac{1}{\tau}g\bigg(\frac z\tau\bigg|\frac{-1}{N\tau};\chi\bigg)
=\sum_{k=1}^{N-1}\chi(k)\cot\bigg(z-\frac{k\pi}{N}\bigg)+4\sum_{n=1}^{\infty}\frac{g_n(\chi)q^n}{1-q^n}\sin 2nz.
\end{align*}

\end{proof}

Appealing to the fact that
$g_n(\chi)=\overline{\chi(n)}g_1(\chi)$
when $\chi$ is primitive,
we have
 \begin{cor}\label{g-transform-imagy1} Suppose $\chi$ is an even primitive Dirichlet character modulo $N$.
Then
\begin{align*}
\frac1\tau g\bigg(\frac z\tau\bigg|-\frac 1{N\tau};\chi\bigg)
=\sum_{k=1}^{N-1}\chi(k)\cot(z-\frac{k\pi}N)
 +4g_1(\chi)\sum_{n=1}^\infty\overline {\chi(n)}\frac{q^n}{1-q^n}\sin2nz.
\end{align*}
\end{cor}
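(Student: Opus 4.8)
The plan is to read this off directly from Theorem \ref{g-transform-imagy0} by specializing the Gauss sums $g_n(\chi)$ to the primitive case; no fresh analysis is needed. Theorem \ref{g-transform-imagy0} already supplies, for every even $\chi$ modulo $N$, the expansion
\begin{align*}
\frac1\tau g\bigg(\frac z\tau\bigg|-\frac 1{N\tau};\chi\bigg)
=\sum_{k=1}^{N-1}\chi(k)\cot\bigg(z-\frac{k\pi}N\bigg)
+4\sum_{n=1}^\infty\frac{g_n(\chi)q^n}{1-q^n}\sin2nz.
\end{align*}
The cotangent term is independent of any primitivity hypothesis, so I would simply carry it over verbatim; the entire content of the corollary is the rewriting of the Lambert series on the right.

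The single ingredient I would invoke is the classical evaluation recorded in the introduction (following \cite[p.\ 334]{AI66}): when $\chi$ is primitive one has $g_n(\chi)=\overline{\chi(n)}\,g_1(\chi)$ for every integer $n$. Substituting this identity term by term into the sum and extracting the constant factor $g_1(\chi)$ converts
\begin{align*}
4\sum_{n=1}^\infty\frac{g_n(\chi)q^n}{1-q^n}\sin2nz
=4g_1(\chi)\sum_{n=1}^\infty\overline{\chi(n)}\,\frac{q^n}{1-q^n}\sin2nz,
\end{align*}
which is precisely the asserted form. Combining this with the unchanged cotangent term completes the proof.

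There is essentially no obstacle here, since all of the genuine work resides in Theorem \ref{g-transform-imagy0}; the only point worth verifying is that the Gauss-sum identity is valid for \emph{all} $n$, not merely those coprime to $N$. This is exactly the case: for primitive $\chi$ one has $g_n(\chi)=0$ whenever $\gcd(n,N)>1$, matching $\overline{\chi(n)}=0$ on the right-hand side, so the factorization $g_n(\chi)=\overline{\chi(n)}\,g_1(\chi)$ is legitimate for every term of the Lambert series and the substitution above is justified without exception.
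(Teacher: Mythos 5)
Your proposal is correct and matches the paper's own derivation exactly: the paper introduces the corollary with the single remark that one appeals to the identity $g_n(\chi)=\overline{\chi(n)}\,g_1(\chi)$ for primitive $\chi$ and substitutes it into Theorem \ref{g-transform-imagy0}. Your additional check that the identity also holds when $\gcd(n,N)>1$ (both sides vanishing) is a sound, if implicit, point in the paper's argument.
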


Suppose $d$ is the discriminant of the real quadratic field $\mathbb Q (\sqrt D)$. Let $\chi_d(n)=\left(\frac dn\right)$. It could be found \cite[p.\ 347]{AI66} that $\chi_d$ is an even primitive character modulo $d$
and
\begin{align*}
\sum_{k=1}^{d-1}\chi_d(k)\cos\frac{2nk\pi}{d}
&=\sqrt d\chi_d(n).
\end{align*}

\begin{cor} \label{g-transform-imagy} Suppose $d$ is the discriminant of the real quadratic field $\mathbb Q(\sqrt D)$. Let $\chi_d(n)=\left(\frac dn\right)$. Then,
we have
\begin{align*}
\frac1\tau g\bigg(\frac z\tau\bigg|-\frac 1{d\tau};\chi_d\bigg)
=\sum_{k=1}^{d-1}\chi_d(k)\cot\bigg(z-\frac{k\pi}d\bigg)+4\sqrt d \sum_{n=1}^\infty{\chi_d(n)}\frac{q^n}{1-q^n}\sin2nz.
\end{align*}
\end{cor}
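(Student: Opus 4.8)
The plan is to obtain the identity as a direct specialization of Theorem~\ref{g-transform-imagy0}, since $\chi_d$ is an even character modulo $d$ and that theorem requires only evenness. First I would invoke the quoted fact that $\chi_d(n)=\left(\frac{d}{n}\right)$ is an even (indeed primitive) character modulo $d$, so the hypothesis of Theorem~\ref{g-transform-imagy0} is met with $N$ replaced by $d$. This immediately yields
\begin{align*}
\frac1\tau g\bigg(\frac z\tau\bigg|-\frac 1{d\tau};\chi_d\bigg)
=\sum_{k=1}^{d-1}\chi_d(k)\cot\bigg(z-\frac{k\pi}d\bigg)
+4\sum_{n=1}^{\infty}\frac{g_n(\chi_d)q^n}{1-q^n}\sin2nz,
\end{align*}
so the entire problem reduces to evaluating the Gauss sum $g_n(\chi_d)$.

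The key step is then to show $g_n(\chi_d)=\sqrt d\,\chi_d(n)$. Recall from the proof of Theorem~\ref{g-transform-imagy0} that, because $\chi_d$ is even, the sine sum $\sum_{k=1}^{d-1}\chi_d(k)\sin\frac{2kn\pi}{d}$ vanishes, so that $g_n(\chi_d)=\sum_{k=1}^{d-1}\chi_d(k)\cos\frac{2kn\pi}{d}$. I would then apply the cosine-sum identity $\sum_{k=1}^{d-1}\chi_d(k)\cos\frac{2nk\pi}{d}=\sqrt d\,\chi_d(n)$, cited just above the statement for the Kronecker symbol attached to a real quadratic discriminant, to conclude $g_n(\chi_d)=\sqrt d\,\chi_d(n)$.

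Substituting this value into the displayed expansion produces the asserted formula with the factor $4\sqrt d$ and coefficient $\chi_d(n)$. Alternatively one can route through Corollary~\ref{g-transform-imagy1}: since $\chi_d$ is primitive, $g_n(\chi_d)=\overline{\chi_d(n)}\,g_1(\chi_d)$, and because $\chi_d$ is real-valued one has $\overline{\chi_d(n)}=\chi_d(n)$, while the cosine-sum identity at $n=1$ gives $g_1(\chi_d)=\sqrt d$; the two routes agree. There is no genuine obstacle here: the only point requiring care is the correct evaluation of the Gauss sum, which is supplied by the classical fact quoted before the statement, so the proof is a short substitution.
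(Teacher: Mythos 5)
Your proposal is correct and follows exactly the route the paper intends: the corollary is a direct specialization of Theorem \ref{g-transform-imagy0} with $N=d$, combined with the quoted evaluation $g_n(\chi_d)=\sum_{k=1}^{d-1}\chi_d(k)\cos\frac{2nk\pi}{d}=\sqrt d\,\chi_d(n)$ from \cite[p.~347]{AI66}. The paper gives no separate proof precisely because this substitution is all that is required, and your alternative route via Corollary \ref{g-transform-imagy1} is a consistent cross-check.
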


To end the section, we record the following identities:\\

 \begin{cor}\label{g-cor2}
Suppose $\chi$ is an even Dirichlet character modulo $N$. Then
\begin{align*}
\sum_{k=1}^{N-1}\chi(k)\frac{\theta'_1}{\theta_1}\left(z+k\pi\tau|N\tau\right)
= 4\sum_{n=1}^\infty\frac{\sum_{k=1}^{N-1}\chi(k)q^{kn}}{1-q^{Nn}}\sin {2nz},
\end{align*}
and
\begin{align*}
\sum_{k=1}^{N-1}\chi(k)\frac{\theta'_1}{\theta_1}\bigg(z+\frac{k\pi}{N}\bigg|\tau\bigg)
=\sum_{k=1}^{N-1}\chi(k)\cot(z+\frac{k\pi}N) +4\sum_{n=1}^\infty\frac{g_n(\chi)q^n}{1-q^n}\sin2nz.
\end{align*}
 \end{cor}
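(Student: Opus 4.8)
The first identity requires essentially no work, so I would dispose of it immediately: by the very definition of $g(z|\tau;\chi)$ its left-hand side \emph{is} $g(z|\tau;\chi)$, and hence the claim is nothing but a restatement of Theorem~\ref{g-transform}. I would simply cite that theorem and move on.

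For the second identity my plan is to reduce it to Theorem~\ref{g-transform-imagy0} by exploiting the symmetry $k\mapsto N-k$. First I would recall the intermediate relation obtained inside the proof of Theorem~\ref{g-transform-imagy0},
\begin{align*}
\frac{1}{\tau}g\bigg(\frac z\tau\bigg|-\frac{1}{N\tau};\chi\bigg)
=\sum_{k=1}^{N-1}\chi(k)\frac{\theta'_1}{\theta_1}\bigg(z-\frac{k\pi}{N}\bigg|\tau\bigg).
\end{align*}
The left-hand side of the target identity differs from this sum only in the sign of $k\pi/N$ inside the argument, so the whole task reduces to showing that the ``$-$'' sum equals the ``$+$'' sum.

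The key step will be the substitution $k\mapsto N-k$. Since $\chi$ is even, $\chi(N-k)=\chi(-k)=\chi(k)$, while the argument becomes $z-\frac{(N-k)\pi}{N}=z-\pi+\frac{k\pi}{N}$. I would then invoke the $\pi$-periodicity of $\theta'_1/\theta_1$, which follows from $\theta_1(z+\pi|\tau)=-\theta_1(z|\tau)$, to discard the $-\pi$; the reindexed summand thus becomes $\chi(k)\frac{\theta'_1}{\theta_1}(z+\frac{k\pi}{N}|\tau)$. This identifies the left-hand side of the second identity with $\frac{1}{\tau}g(\frac z\tau|-\frac{1}{N\tau};\chi)$. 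Finally I would apply Theorem~\ref{g-transform-imagy0} and carry out the \emph{same} reindexing on its cotangent term, using the $\pi$-periodicity of $\cot$ to convert $\sum_k\chi(k)\cot(z-\frac{k\pi}{N})$ into $\sum_k\chi(k)\cot(z+\frac{k\pi}{N})$; the Lambert series involving $g_n(\chi)$ carries no $k$-dependence in its arguments and so is unchanged, yielding the stated formula.

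I do not anticipate any serious obstacle, as both assertions are immediate consequences of theorems already in hand. The only point demanding care is to apply the substitution $k\mapsto N-k$, the evenness relation $\chi(N-k)=\chi(k)$, and the $\pi$-periodicity uniformly across every term, so that the sign flip in the argument is tracked consistently and the cotangent and theta-quotient pieces transform in exactly the same fashion.
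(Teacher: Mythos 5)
Your proposal is correct and matches the paper's own treatment: the paper records this corollary without proof as an immediate consequence of Theorem \ref{g-transform} (plus the definition of $g$) and of Theorem \ref{g-transform-imagy0}, silently using exactly the sign symmetry you make explicit. Your reindexing $k\mapsto N-k$ together with $\chi(N-k)=\chi(k)$ and the $\pi$-periodicity of $\theta_1'/\theta_1$ and $\cot$ is precisely the step the authors leave implicit when they pass from $z-\frac{k\pi}{N}$ in the proof of Theorem \ref{g-transform-imagy0} to $z+\frac{k\pi}{N}$ here.
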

 \section{Eisenstein series generated from $g\left(z|\tau;\chi\right)$ and $g\left(\frac{z}{\tau}|\frac{-1}{N\tau};\chi\right)$}\label{section3}

We begin with the investigation of the Eisenstein series generated from the elliptic function $g(z|\tau;\chi)$. We shall follow the same approach of the Weierstrass elliptic function by finding the power series expansion at $z=0$. However, for later application, we will derive a slightly more general identity which is given in the following lemma.\\

\begin{lemma}\label{Lem-wp-derive-l}Suppose $\chi$ is an even Dirichlet character modulo $N$ and integer $l\ge 1$. Then
\begin{align*}
\sum_{k=1}^{N-1}\chi(k)\wp^{(l)}(z+k\pi\tau|N\tau)
&=(-1)^l (l+1)!\sum_{m,n=-\infty}^{\infty}\frac{\chi(n)}{(z+m\pi+n\pi\tau)^{l+2}}\\
&=-\sum_{k=1}^{N-1}\chi(k)\left(\frac{\theta'_1}{\theta_1}\right)^{(l+1)}(z+k\pi\tau|N\tau).
 \end{align*}
\end{lemma}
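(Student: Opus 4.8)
The plan is to obtain both equalities by differentiating two standard representations of $\wp(z|\tau)$ term by term and then reindexing the resulting lattice sum against the character $\chi$.

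First I would differentiate the lattice-sum definition \eqref{wp-defn} with respect to $z$. Since $l\ge 1$, the constant correction terms $-1/(m\pi+n\pi\tau)^2$ are annihilated, and differentiating each summand $(z+m\pi+n\pi\tau)^{-2}$ exactly $l$ times produces $(-1)^l(l+1)!\,(z+m\pi+n\pi\tau)^{-(l+2)}$, with the leading term $1/z^2$ contributing the $(m,n)=(0,0)$ summand. Hence
\begin{align*}
\wp^{(l)}(z|\tau)=(-1)^l(l+1)!\sum_{m,n=-\infty}^{\infty}\frac{1}{(z+m\pi+n\pi\tau)^{l+2}},
\end{align*}
where the sum now runs over all of $\mathbb{Z}^2$. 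For $l\ge 1$ the exponent $l+2\ge 3>2$ guarantees absolute convergence of this Eisenstein-type sum, which legitimizes the termwise differentiation as well as the rearrangements below.

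Next I would specialize this identity, replacing $\tau$ by $N\tau$ and $z$ by $z+k\pi\tau$, then multiply by $\chi(k)$ and sum over $k=1,\dots,N-1$. The $\tau$-coordinate of a generic denominator then reads $(k+nN)\pi\tau$. The key combinatorial step is to reindex by $j=k+nN$: as $k$ runs over $1,\dots,N-1$ and $n$ over $\mathbb{Z}$, the integer $j$ runs bijectively over all integers not divisible by $N$, while $\chi(k)=\chi(j)$ by periodicity. Because $\chi(j)=0$ whenever $N\mid j$, the omitted terms with $j\equiv 0 \pmod N$ can be restored at no cost, so the double sum collapses to $\sum_{m,n}\chi(n)(z+m\pi+n\pi\tau)^{-(l+2)}$ after renaming the index. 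This gives the first claimed equality, and the absolute convergence established above is precisely what permits collecting the terms in this order.

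Finally, for the second equality I would instead differentiate the theta representation \eqref{theta-wp}. Since $l\ge 1$ kills the constant $-\tfrac13 E_2(\tau)$, differentiating $l$ times yields $\wp^{(l)}(z|\tau)=-\left(\frac{\theta'_1}{\theta_1}\right)^{(l+1)}(z|\tau)$. Applying the same specialization $\tau\mapsto N\tau$, $z\mapsto z+k\pi\tau$ and summing against $\chi(k)$ produces the right-hand expression. I expect the reindexing/rearrangement of the lattice sum to be the only point requiring genuine care; everything else follows directly from differentiating the two standard expansions of $\wp$.
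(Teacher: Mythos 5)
Your proposal is correct and follows essentially the same route as the paper: differentiate the lattice definition \eqref{wp-defn} term by term (the correction terms and the constant $-\tfrac13 E_2$ vanish for $l\ge 1$), reindex $j=k+nN$ using the periodicity of $\chi$ and the vanishing of $\chi$ on multiples of $N$, and obtain the theta form by differentiating \eqref{theta-wp}. No gaps; the absolute convergence remark for $l+2\ge 3$ is exactly the justification the paper relies on.
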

\begin{proof}
 From \eqref{wp-defn},
 \begin{align*}
 \wp^{(l)}(z|\tau)=(-1)^l (l+1)!\sum_{m,n=-\infty}^{\infty}\frac{1}{(z+m\pi+n\pi\tau)^{l+2}}.
 \end{align*}

For integer $l\ge 1$, the series converges absolutely. Then

\begin{align*}
\sum_{k=1}^{N-1}\chi(k)\wp^{(l)}(z+k\pi\tau|N\tau)
&=(-1)^l (l+1)!\sum_{m,n=-\infty}^{\infty}\sum_{k=1}^{N-1}\frac{\chi(k)}{(z+m\pi+(nN+k)\pi\tau)^{l+2}}\\
&=(-1)^l (l+1)!\sum_{m,n=-\infty}^{\infty}\sum_{k=1}^{N-1}\frac{\chi(nN+k)}{(z+m\pi+(nN+k)\pi\tau)^{l+2}}\\
&=(-1)^l (l+1)!\sum_{m,n=-\infty}^{\infty}\frac{\chi(n)}{(z+m\pi+n\pi\tau)^{l+2}}.
\end{align*}

From \eqref{theta-wp},
\begin{align*}
\left(\frac{\theta'_1}{\theta_1}\right)'(z+k\pi\tau|N\tau)=-\wp(z+k\pi\tau|N\tau)-\frac13E_2(N\tau)
\end{align*}
 and from which
 \begin{align*}
  \sum_{k=1}^{N-1}\chi(k)\left(\frac{\theta'_1}{\theta_1}\right)^{(l+1)}(z+k\pi\tau|N\tau)
= -\sum_{k=1}^{N-1}\chi(k)\wp^{(l)}(z+k\pi\tau|N\tau),
 \end{align*}
we complete the proof of Lemma \ref{Lem-wp-derive-l}.
\end{proof}

\begin{lemma}\label{Lem-wp-derive-2l(2l+1)-cos(sin)}Suppose $\chi$ is an even Dirichlet character modulo $N$  and integer $l\ge 0$. Then
\begin{align*}
\sum_{k=1}^{N-1}\chi(k)\wp^{(2l)}(z+k\pi\tau|N\tau)
=(-1)^{l+1}2^{2l+3}\sum_{n=1}^{\infty}\frac{n^{2l+1}\sum_{k=1}^{N-1}\chi(k)q^{kn}}{1-q^{Nn}}\cos 2nz
\end{align*}
and
\begin{align*}
\sum_{k=1}^{N-1}\chi(k)\wp^{(2l+1)}(z+k\pi\tau|N\tau)
=(-1)^{l}2^{2l+4}\sum_{n=1}^{\infty}\frac{n^{2l+2}\sum_{k=1}^{N-1}\chi(k)q^{kn}}{1-q^{Nn}}\sin 2nz.
\end{align*}
\end{lemma}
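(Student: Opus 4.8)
The plan is to obtain both identities by differentiating, termwise in $z$, the Lambert-series expansion already recorded in the first identity of Corollary \ref{g-cor2},
\begin{align*}
\sum_{k=1}^{N-1}\chi(k)\frac{\theta'_1}{\theta_1}(z+k\pi\tau|N\tau)
=4\sum_{n=1}^\infty\frac{\sum_{k=1}^{N-1}\chi(k)q^{kn}}{1-q^{Nn}}\sin 2nz,
\end{align*}
and then converting the derivatives of $\theta'_1/\theta_1$ into derivatives of $\wp$ by means of Lemma \ref{Lem-wp-derive-l}, which gives $\sum_{k=1}^{N-1}\chi(k)\wp^{(l)}(z+k\pi\tau|N\tau)=-\sum_{k=1}^{N-1}\chi(k)(\theta'_1/\theta_1)^{(l+1)}(z+k\pi\tau|N\tau)$ for $l\ge 1$. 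In this way the whole argument collapses to differentiating $\sin 2nz$ the correct number of times and keeping track of constants.

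First I would record the elementary identities
\begin{align*}
\frac{d^{2m}}{dz^{2m}}\sin 2nz=(-1)^m(2n)^{2m}\sin 2nz,
\qquad
\frac{d^{2m+1}}{dz^{2m+1}}\sin 2nz=(-1)^m(2n)^{2m+1}\cos 2nz.
\end{align*}
For the even-order identity I use Lemma \ref{Lem-wp-derive-l} in the form where $\wp^{(2l)}$ corresponds to $(\theta'_1/\theta_1)^{(2l+1)}$; differentiating the base expansion $2l+1$ times turns the sine into a cosine and brings out $(-1)^l(2n)^{2l+1}$, so that together with the prefactor $4$ and the minus sign from Lemma \ref{Lem-wp-derive-l} the overall constant becomes $(-1)^{l+1}2^{2l+3}$ against the weight $n^{2l+1}$. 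For the odd-order identity $\wp^{(2l+1)}$ corresponds to $(\theta'_1/\theta_1)^{(2l+2)}$; differentiating $2l+2$ times keeps a sine with factor $(-1)^{l+1}(2n)^{2l+2}$, and after the minus sign the exponent $(-1)^{l+2}$ collapses to $(-1)^l$, yielding $(-1)^l2^{2l+4}$ against $n^{2l+2}$. Both constants then agree exactly with the claimed formulas.

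Two points require care. First, the termwise differentiation must be justified: the coefficients $\sum_{k=1}^{N-1}\chi(k)q^{kn}/(1-q^{Nn})$ are $O(|q|^n)$ uniformly for $(z,\tau)$ in compact subsets of the region where the base expansion is valid, so each differentiation only inserts a polynomial factor $n^j$ that is swamped by the geometric decay $|q|^n$; the differentiated series therefore converge uniformly on such compacta, and interchanging $d/dz$ with $\sum_n$ to all orders is legitimate. Second, the even-order identity at $l=0$ is the only case not covered by Lemma \ref{Lem-wp-derive-l} (which requires $l\ge 1$); here I would instead invoke \eqref{theta-wp} directly, and the resulting additive term $-\tfrac13E_2(N\tau)\sum_{k=1}^{N-1}\chi(k)$ vanishes since $\sum_{k=1}^{N-1}\chi(k)=0$ for a nonprincipal character. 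Assembling the two parity cases then completes the proof; I expect the sign-and-power bookkeeping across the parity split to be the only genuine source of error.
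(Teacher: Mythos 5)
Your proof is correct and follows essentially the same route as the paper: both start from the Lambert-series expansion in Corollary \ref{g-cor2}, differentiate termwise in $z$, and convert derivatives of $\theta'_1/\theta_1$ into derivatives of $\wp$ via \eqref{theta-wp} (the paper converts after one differentiation and then differentiates $2l$ more times, which is the same computation in a different order). You are in fact slightly more careful than the paper in justifying termwise differentiation and in noting that the $l=0$ case needs $\sum_{k=1}^{N-1}\chi(k)=0$ to kill the $E_2(N\tau)$ term.
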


\begin{proof}
From Corollary \ref{g-cor2}, we have
\begin{align*}
\sum_{k=1}^{N-1}\chi(k)\frac{\theta'_1}{\theta_1}\left(z+k\pi\tau|N\tau\right)= 4\sum_{n=1}^\infty\frac{\sum_{k=1}^{N-1}\chi(k)q^{kn}}{1-q^{Nn}}\sin {2nz}.
\end{align*}
Thus
\begin{align}\label{g-cor2-1-derive}
\sum_{k=1}^{N-1}\chi(k)\left(\frac{\theta'_1}{\theta_1}\right)'\left(z+k\pi\tau|N\tau\right)
= 8\sum_{n=1}^\infty \frac{n\sum_{k=1}^{N-1}\chi(k)q^{kn}}{1-q^{Nn}}\cos {2nz}.
\end{align}\\

From \eqref{theta-wp}, we find
\begin{align*}
\left(\frac{\theta'_1}{\theta_1}\right)'(z+k\pi\tau|N\tau)=-\wp(z+k\pi\tau|N\tau)-\frac13E_2(N\tau).
\end{align*}
Then, together with \eqref{g-cor2-1-derive}, we obtain
\begin{align*}
\sum_{k=1}^{N-1}\chi(k)\wp(z+k\pi\tau|N\tau)
=-8\sum_{n=1}^{\infty}\frac{n\sum_{k=1}^{N-1}\chi(k)q^{kn}}{1-q^{Nn}}\cos 2nz.
\end{align*}
Differentiating the above identity $2l$ times, we obtain the desired result. \\

We omit the proof of the second identity, since it is identical.\\
\end{proof}

\begin{lemma}
Suppose $\chi$ is a Dirichlet character modulo $N$. Then, formally,
\begin{align*}
\sum_{n=1}^\infty \frac{n^l\sum_{k=1}^{N-1}\chi(k)q^{kn}}{1-q^{Nn}}f(n)=\sum_{m,n=1}^{\infty}n^lf(n)\chi(m)q^{mn}.
\end{align*}
\end{lemma}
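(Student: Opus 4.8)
The plan is to treat the asserted identity as one between formal power series in $q$, so that convergence plays no role (this is what ``formally'' signals), and to reduce the Lambert-type kernel on the left to a plain double series by expanding the geometric factor. First I would write, for each fixed $n\ge 1$,
\[
\frac{1}{1-q^{Nn}}=\sum_{j=0}^{\infty}q^{Nnj},
\]
which is legitimate at the level of formal power series. Substituting this into the left-hand side and interchanging the formal summations gives
\[
\sum_{n=1}^{\infty}n^{l}f(n)\sum_{k=1}^{N-1}\sum_{j=0}^{\infty}\chi(k)\,q^{(k+Nj)n}.
\]

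The second step is to recognize the inner sum over $k$ and $j$ as a single sum over the new index $m=k+Nj$. As $k$ runs through $1,\dots,N-1$ and $j$ through $0,1,2,\dots$, the integer $m$ runs through exactly the positive integers with $m\not\equiv 0\pmod N$, each attained once: given such an $m$, the division algorithm $m=Nj+k$ with $0\le k<N$ forces $k\neq 0$, hence $k\in\{1,\dots,N-1\}$, and the pair $(k,j)$ is unique. Using the periodicity $\chi(k)=\chi(k+Nj)$ (property (2) of a Dirichlet character), the summand $\chi(k)q^{(k+Nj)n}$ becomes $\chi(m)q^{mn}$.

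The third step is to restore the omitted indices $m\equiv 0\pmod N$. For each such $m$ we have $\gcd(m,N)=N>1$, so $\chi(m)=0$ by property (4); hence adjoining these terms changes nothing, and the reindexed inner sum equals $\sum_{m=1}^{\infty}\chi(m)q^{mn}$. Feeding this back, I obtain
\[
\sum_{n=1}^{\infty}n^{l}f(n)\sum_{m=1}^{\infty}\chi(m)q^{mn}
=\sum_{m,n=1}^{\infty}n^{l}f(n)\chi(m)q^{mn},
\]
which is precisely the right-hand side.

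There is no genuine analytic obstacle, since working formally lets us manipulate the coefficients of $q$ directly rather than justify rearrangements of convergent series. The only point that requires care is the bookkeeping in the reindexing $m=k+Nj$: one must verify that $(k,j)\mapsto k+Nj$ is a bijection from $\{1,\dots,N-1\}\times\{0,1,2,\dots\}$ onto the positive integers not divisible by $N$, and that the vanishing of $\chi$ on the multiples of $N$ is exactly what allows the sum to be completed to all $m\ge 1$. Both are immediate from properties (2) and (4), so the argument is entirely elementary.
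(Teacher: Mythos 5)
Your proposal is correct and follows essentially the same route as the paper: expand $\frac{1}{1-q^{Nn}}$ as a geometric series, use the periodicity $\chi(k)=\chi(k+Nj)$ to reindex with $m=k+Nj$, and complete the sum over all $m\ge 1$ since $\chi$ vanishes on multiples of $N$. Your write-up merely makes the bijection and the completion step more explicit than the paper does.
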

\begin{proof}
We note
\begin{align*}
\frac{\sum_{k=1}^{N-1}\chi(k)q^{kn}}{1-q^{Nn}}
&=\sum_{k=1}^{N-1}\chi(k)q^{kn}\sum_{j=0}^{\infty}q^{jNn}\\
&=\sum_{j=0}^{\infty}\sum_{k=1}^{N-1}\chi(jN+k)q^{(jN+k)n}\\
&=\sum_{m=1}^{\infty}\chi(m)q^{mn}
\end{align*}
and the desired identity follows readily.\\
\end{proof}

In particular, choosing $f(n)=\cos 2nz$ and $\sin 2nz$, respectively, we derive from Lemmas \ref{Lem-wp-derive-l} and  \ref{Lem-wp-derive-2l(2l+1)-cos(sin)}, the following identities,

\begin{lemma}\label{Lem-wp-derive-2l(2l+1)-E} Suppose $\chi$ is an even Dirichlet character modulo $N$. Then
\begin{itemize}
\item[(1).] for integer $l\ge 1$,
\begin{align*}
\sum_{k=1}^{N-1}\chi(k)\wp^{(2l)}(z+k\pi\tau|N\tau)
&=(2l+1)!\sum_{m,n=-\infty}^{\infty}\frac{\chi(n)}{(z+m\pi+n\pi\tau)^{2l+2}}\\
&=(-1)^{l+1}2^{2l+3}\sum_{m,n=1}^{\infty}n^{2l+1}\chi(m) q^{mn}\cos 2nz;
\end{align*}
\item[(2).] for integer $l\ge 0$,
\begin{align*}
\sum_{k=1}^{N-1}\chi(k)\wp^{(2l+1)}(z+k\pi\tau|N\tau)
&=-(2l+2)!\sum_{m,n=-\infty}^{\infty}\frac{\chi(n)}{(z+m\pi+n\pi\tau)^{2l+3}}\\
&=(-1)^{l}2^{2l+4}\sum_{m,n=1}^{\infty}n^{2l+2}\chi(m)q^{mn}\sin 2nz.
\end{align*}
\end{itemize}
\end{lemma}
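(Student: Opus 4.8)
The plan is to obtain the assertion by stitching together the three preceding lemmas, treating the two parities $\wp^{(2l)}$ and $\wp^{(2l+1)}$ separately; no genuinely new computation is needed, only careful bookkeeping of signs, factorials, and indices. In each case the middle (partial-fraction) expression will come from Lemma~\ref{Lem-wp-derive-l}, while the right-hand trigonometric $q$-series will come from Lemma~\ref{Lem-wp-derive-2l(2l+1)-cos(sin)} after applying the auxiliary Lemma that rewrites a Lambert series as a double sum.

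For the first equality in part~(1), I would specialise Lemma~\ref{Lem-wp-derive-l} at $l\mapsto 2l$. Since $(-1)^{2l}=1$, the relevant factorial is $(2l+1)!$ and the exponent is $2l+2$, so that
\begin{align*}
\sum_{k=1}^{N-1}\chi(k)\wp^{(2l)}(z+k\pi\tau|N\tau)
=(2l+1)!\sum_{m,n=-\infty}^{\infty}\frac{\chi(n)}{(z+m\pi+n\pi\tau)^{2l+2}}.
\end{align*}
Likewise, for part~(2) I would set $l\mapsto 2l+1$; now $(-1)^{2l+1}=-1$ produces the minus sign, the factorial becomes $(2l+2)!$, and the exponent is $2l+3$, which gives the first equality there. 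The second equality in each part then follows by taking the cosine (resp.\ sine) Lambert identity of Lemma~\ref{Lem-wp-derive-2l(2l+1)-cos(sin)} and applying the auxiliary Lemma with $f(n)=\cos 2nz$ and power $n^{2l+1}$ (resp.\ $f(n)=\sin 2nz$ and power $n^{2l+2}$), which converts the Lambert sum
\begin{align*}
\sum_{n=1}^{\infty}\frac{n^{2l+1}\sum_{k=1}^{N-1}\chi(k)q^{kn}}{1-q^{Nn}}\cos 2nz
=\sum_{m,n=1}^{\infty}n^{2l+1}\chi(m)q^{mn}\cos 2nz,
\end{align*}
and analogously in the odd case.

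There is no substantial obstacle here; the two points that require attention are the parity substitutions above and one convergence issue. The partial-fraction series of Lemma~\ref{Lem-wp-derive-l} is invoked only in orders $\ge 1$, so in part~(1) one needs $2l\ge 1$, i.e.\ $l\ge 1$. This is exactly why part~(1) excludes $l=0$: the case $\wp^{(0)}=\wp$ has only a conditionally convergent lattice sum and falls outside the scope of Lemma~\ref{Lem-wp-derive-l}. By contrast, in part~(2) the order $2l+1$ is automatically $\ge 1$, so $l\ge 0$ is admissible. With these caveats the two chains of equalities close, completing the proof.
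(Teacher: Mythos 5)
Your proposal is correct and follows essentially the same route as the paper, which likewise obtains the partial-fraction expressions by specialising Lemma~\ref{Lem-wp-derive-l} at orders $2l$ and $2l+1$ and the $q$-series expressions by combining Lemma~\ref{Lem-wp-derive-2l(2l+1)-cos(sin)} with the auxiliary Lambert-series lemma applied to $f(n)=\cos 2nz$ and $f(n)=\sin 2nz$. Your observation that the restriction $l\ge 1$ in part (1) comes from the absolute-convergence hypothesis of Lemma~\ref{Lem-wp-derive-l} (excluding the conditionally convergent $\wp^{(0)}$ case) is exactly the right accounting for the stated ranges.
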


We define, for integer $k\ge 1$,
\begin{align*}
E_{2k}(\tau,\chi)
=\frac{(-1)^k2^{2k+1}}{(2k-1)!}\sum_{m,n=1}^{\infty}n^{2k-1}\chi(m)q^{mn}.
\end{align*}

We derive the power series expansion of $g(z|\tau;\chi)$.

\begin{theorem} Suppose $\chi$ is an even Dirichlet character modulo $N$. Then
\begin{align*}
&g(z|\tau;\chi)
=-\sum_{k=0}^{\infty}E_{2k+2}(\tau,\chi)z^{2k+1},\\
&\sum_{k=0}^{N-1}\chi(k)\wp(z+k\pi\tau|N\tau)
=\sum_{k=0}^{\infty}(2k+1) E_{2k+2}(\tau,\chi) z^{2k}
\end{align*}
and, for integer $k\ge 2$,
\begin{align*}
E_{2k}(\tau,\chi)=\frac{1}{\pi^{2k}}\sum_{m,n=-\infty}^{\infty}\frac{\chi(n)}{(m+n\tau)^{2k}}.
\end{align*}
\end{theorem}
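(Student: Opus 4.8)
The plan is to establish the three assertions in the order listed, drawing the first two directly from the $q$-expansion of $g(z|\tau;\chi)$ and the last from evaluating Lemma~\ref{Lem-wp-derive-2l(2l+1)-E} at $z=0$. For the first identity I would start from Theorem~\ref{g-transform} and apply the Lambert series lemma (the unlabelled lemma preceding Lemma~\ref{Lem-wp-derive-2l(2l+1)-E}) with $l=0$ and $f(n)=\sin 2nz$, which turns the Lambert sum into the plain double sum $g(z|\tau;\chi)=4\sum_{m,n=1}^\infty\chi(m)q^{mn}\sin 2nz$. I would then insert the Taylor series $\sin 2nz=\sum_{k=0}^\infty\frac{(-1)^k2^{2k+1}}{(2k+1)!}n^{2k+1}z^{2k+1}$, interchange the order of summation, and read off the coefficient of $z^{2k+1}$ as $\frac{(-1)^k2^{2k+3}}{(2k+1)!}\sum_{m,n\ge1}n^{2k+1}\chi(m)q^{mn}$. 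Comparing with the definition of $E_{2k+2}(\tau,\chi)$ — which upon shifting the index gives $\sum_{m,n\ge1}n^{2k+1}\chi(m)q^{mn}=(-1)^{k+1}\frac{(2k+1)!}{2^{2k+3}}E_{2k+2}(\tau,\chi)$ — the coefficient collapses to $-E_{2k+2}(\tau,\chi)$, which is exactly the claimed expansion.

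For the second identity I would differentiate the first relation term by term, obtaining $g'(z|\tau;\chi)=-\sum_{k=0}^\infty(2k+1)E_{2k+2}(\tau,\chi)z^{2k}$. Independently, differentiating the defining sum $g(z|\tau;\chi)=\sum_{k=1}^{N-1}\chi(k)\frac{\theta_1'}{\theta_1}(z+k\pi\tau|N\tau)$ and using \eqref{theta-wp} in the form $\left(\frac{\theta_1'}{\theta_1}\right)'(z+k\pi\tau|N\tau)=-\wp(z+k\pi\tau|N\tau)-\tfrac13E_2(N\tau)$ gives
\[
g'(z|\tau;\chi)=-\sum_{k=1}^{N-1}\chi(k)\wp(z+k\pi\tau|N\tau)-\tfrac13E_2(N\tau)\sum_{k=1}^{N-1}\chi(k).
\]
Here I would invoke that $\chi$ is nonprincipal, so $\sum_{k=1}^{N-1}\chi(k)=0$ and the $E_2(N\tau)$ term vanishes; since $\chi(0)=0$ the left side may be written with summation from $k=0$, and equating the two expressions for $g'$ yields the stated expansion. (Equivalently, one could bypass differentiation and apply the Lambert series lemma with $l=1$, $f(n)=\cos 2nz$ to the base identity $\sum_{k=1}^{N-1}\chi(k)\wp(z+k\pi\tau|N\tau)=-8\sum_{n\ge1}\frac{n\sum_k\chi(k)q^{kn}}{1-q^{Nn}}\cos 2nz$ established inside the proof of Lemma~\ref{Lem-wp-derive-2l(2l+1)-cos(sin)}, then Taylor expand $\cos 2nz$.)

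For the third identity I would set $z=0$ in Lemma~\ref{Lem-wp-derive-2l(2l+1)-E}(1). The right-hand side becomes $(-1)^{l+1}2^{2l+3}\sum_{m,n\ge1}n^{2l+1}\chi(m)q^{mn}=(2l+1)!\,E_{2l+2}(\tau,\chi)$ by the definition of $E_{2l+2}(\tau,\chi)$, while the left-hand side becomes $\frac{(2l+1)!}{\pi^{2l+2}}\sum_{m,n=-\infty}^\infty\frac{\chi(n)}{(m+n\tau)^{2l+2}}$; the $n=0$ terms carry $\chi(0)=0$, so no term of the lattice sum develops a pole at $z=0$ and the evaluation is legitimate. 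Cancelling $(2l+1)!$ and writing $k=l+1$ gives $E_{2k}(\tau,\chi)=\frac{1}{\pi^{2k}}\sum_{m,n}\frac{\chi(n)}{(m+n\tau)^{2k}}$ for all $k\ge2$.

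The main obstacle is justifying the term-by-term manipulations rather than the algebra itself. The interchange of summation in expanding $\sin 2nz$ and $\cos 2nz$ requires $z$ to lie in the region of absolute convergence of the double Lambert series, and the lattice-sum representation in the third part is valid only for $k\ge2$ because $\sum_{m,n}\frac{\chi(n)}{(m+n\tau)^{2k}}$ converges absolutely exactly when $2k>2$. The borderline case $2k=2$ is precisely the one excluded by the hypothesis $l\ge1$ in Lemma~\ref{Lem-wp-derive-2l(2l+1)-E}, reflecting the usual quasi-modular obstruction carried by $E_2$; this is also why the $E_2(N\tau)$ term had to be eliminated in the second part via $\sum_{k=1}^{N-1}\chi(k)=0$.
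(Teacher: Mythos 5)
Your proposal is correct and follows essentially the same route as the paper: both arguments obtain the odd Taylor coefficients of $g(z|\tau;\chi)$ from its $q$-expansion (you by expanding $\sin 2nz$ directly, the paper by evaluating $g^{(2l+1)}(0|\tau;\chi)$ via Lemma \ref{Lem-wp-derive-2l(2l+1)-E}), and both read the lattice-sum formula for $E_{2k}(\tau,\chi)$ off the same lemma at $z=0$. If anything you are more careful than the paper, which silently drops the term $\tfrac13E_2(N\tau)\sum_{k=1}^{N-1}\chi(k)$ when passing from $\bigl(\theta_1'/\theta_1\bigr)'$ to $-\wp$; your explicit appeal to $\sum_{k=1}^{N-1}\chi(k)=0$ for nonprincipal $\chi$ is exactly the justification the paper leaves implicit.
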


\begin{proof}
Since $g(z|\tau;\chi)$ is an odd function of $z$, the power series expansion at $z=0$ is of the form
\begin{align*}
g(z|\tau;\chi)=\sum_{k=0}^{\infty}\frac{a_{2k+1}}{(2k+1)!}z^{2k+1}.
\end{align*}
Since
\begin{align*}
g'(z|\tau;\chi)=-\sum_{k=1}^{N-1}\chi(k)\wp(z+k\pi\tau|N\tau),
\end{align*}
we have
\begin{align*}
g^{(2l+1)}(z|\tau;\chi)=-\sum_{k=1}^{N-1}\chi(k)\wp^{(2l)}(z+k\pi\tau|N\tau).
\end{align*}
From Lemma \ref{Lem-wp-derive-2l(2l+1)-E}, for integer $l\ge 0$,
\begin{align*}
a_{2l+1}=g^{(2l+1)}(0|\tau;\chi)=-(2l+1)!E_{2l+2}(\tau,\chi)
\end{align*}
and for integer $l\ge 1$,
\begin{align*}
E_{2l+2}(\tau,\chi)=\sum_{m,n=-\infty}^{\infty}\frac{\chi(n)}{(m\pi+n\pi\tau)^{2l+2}}.
\end{align*}
\end{proof}

Letting  $z=\frac{\pi}{2}, \frac{\pi}{3},\frac{\pi}{4}$ in Lemma \ref{Lem-wp-derive-2l(2l+1)-E}, we obtain additional identities:

\begin{align*}
\sum_{m,n=-\infty}^{\infty}\frac{\chi(n)}{(1/2+m+n\tau)^{2l+2}}
=\frac{(-1)^{l+1}2^{2l+3}\pi^{2l+2}}{(2l+1)!}\sum_{m,n=1}^{\infty}(-1)^nn^{2l+1}\chi(m)q^{mn}
\end{align*}
for integer $l\ge 1$;

\begin{align*}
\sum_{m,n=-\infty}^{\infty}\frac{\chi(n)}{(1/3+m+n\tau)^{2l+3}}
=\frac{(-1)^{l+1}2^{2l+4}\pi^{2l+3}}{(2l+2)!}\sum_{m,n=1}^{\infty}n^{2l+2}\left(\frac{-3}{n}\right)\chi(m)q^{mn}
\end{align*}
and
\begin{align*}
\sum_{m,n=-\infty}^{\infty}\frac{\chi(n)}{(1/4+m+n\tau)^{2l+3}}
=\frac{(-1)^{l+1}2^{2l+4}\pi^{2l+3}}{(2l+2)!}\sum_{m,n=1}^{\infty}n^{2l+2}\left(\frac {-4}{n}\right)\chi(m)q^{mn}
\end{align*}
for integer $l\ge 0$.


Next, we investigate the Eisenstein series generated from the elliptic function $g\left(\frac{z}{\tau}|\frac{-1}{N\tau};\chi\right)$. \\

Suppose $\chi$ is an even Dirichlet character modulo $N$.
We recall that
\begin{align*}
\frac1\tau g\bigg(\frac z\tau\bigg|-\frac 1{N\tau};\chi\bigg)
&=\sum_{k=1}^{N-1}\chi(k)\cot(z+\frac{k\pi}N)+4\sum_{n=1}^\infty\frac{g_n(\chi)q^n}{1-q^n}\sin2nz\\
&=\sum_{k=1}^{N-1}\chi(k)\frac{\theta'_1}{\theta_1}\bigg(z+\frac{k\pi}{N}\bigg|\tau\bigg).
\end{align*}

Let
\begin{align*}
\sum_{k=1}^{N-1}\chi(k)\cot(z+\frac{k\pi}N)
=\sum_{n=0}^{\infty} B_n(\chi) z^{n}.
\end{align*}
Since  $g\left(\frac{z}{\tau}|\frac{-1}{N\tau};\chi\right)$ is an odd function of $z$, we have $B_{2n}(\chi)=0$.

Note that
\begin{align*}
\sum_{n=1}^\infty\frac{g_n(\chi)q^n}{1-q^n}\sin2nz
=\sum_{m,n=1}^{\infty}g_n(\chi)q^{mn}\sin 2nz.
\end{align*}
Then
\begin{align}\label{g-transform-imagy-z-B(exp)}
\frac{1}{\tau}g\left(\frac{z}{\tau}|\frac{-1}{N\tau};\chi\right)
=\sum_{n=0}^{\infty} B_{2n+1} z^{2n+1}+4\sum_{m,n=1}^{\infty}g_n(\chi)q^{mn}\sin 2nz.
\end{align}\\

We define, for integer $k\ge 0$,

\begin{align*}
F_{2k+2}(\tau,\chi):=- B_{2k+1}(\chi)+\frac{(-1)^{k+1}2^{2k+1}}{(2k+1)!}
\sum_{m,n=1}^{\infty}n^{2k+1}g_n(\chi)q^{mn}.
\end{align*}

We  now derive the power series expansion of
 $g\left(\frac{z}{\tau}|\frac{-1}{N\tau};\chi\right)$.

\begin{theorem}\label{g-transform-z-E(exp)} Suppose $\chi$ is an even Dirichlet character modulo $N$. Then
\begin{align*}
&\frac{1}{\tau} g\left(\frac{z}{\tau}|\frac{-1}{N\tau};\chi\right)
=-\sum_{k=0}^{\infty}F_{2k+2}(\tau,\chi)z^{2k+1},\\
&\sum_{k=1}^{N-1}\chi(k)\wp\left(z+\frac{k\pi}{N}|\tau\right)
=\sum_{k=0}^{\infty}(2k+1) F_{2k+2}(\tau,\chi) z^{2k}
\end{align*}
and, for integer $k\ge 1$,
\begin{align*}
F_{2k+2}(\tau,\chi)=\frac{1}{\pi^{2k+2}}\sum_{m,n=-\infty}^{\infty}\frac{\chi(m)}{\left({m}/{N}+n\tau\right)^{2k+2}}.
\end{align*}
\end{theorem}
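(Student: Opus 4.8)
The plan is to follow the template established for $g(z|\tau;\chi)$ in the previous theorem, transporting each step through the imaginary transformation. First I would record the two inputs: the closed form $\frac1\tau g(\frac z\tau|-\frac1{N\tau};\chi)=\sum_{k=1}^{N-1}\chi(k)\frac{\theta'_1}{\theta_1}(z+\frac{k\pi}N|\tau)$, and the expansion \eqref{g-transform-imagy-z-B(exp)}. Since none of the arguments $z+\frac{k\pi}N$ (for $1\le k\le N-1$) is a lattice point at $z=0$, each $\cot(z+\frac{k\pi}N)$ is analytic there, so the function is holomorphic at $z=0$; together with its oddness (already noted, giving $B_{2n}(\chi)=0$) this justifies a pure odd power series in $z$.

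For the first identity I would expand the sine in \eqref{g-transform-imagy-z-B(exp)} as $\sin 2nz=\sum_{j\ge0}\frac{(-1)^j2^{2j+1}n^{2j+1}}{(2j+1)!}z^{2j+1}$, interchange the summations, and collect the coefficient of $z^{2k+1}$. This coefficient is $B_{2k+1}(\chi)$ together with a rescaled Lambert sum $\sum_{m,n}n^{2k+1}g_n(\chi)q^{mn}$, which is precisely $-F_{2k+2}(\tau,\chi)$ by the definition of $F_{2k+2}$; this yields the first displayed identity.

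For the second identity I would differentiate the first with respect to $z$, producing $-\sum_{k\ge0}(2k+1)F_{2k+2}(\tau,\chi)z^{2k}$. Differentiating instead the closed form and invoking \eqref{theta-wp} gives $\frac{d}{dz}\big[\frac1\tau g(\frac z\tau|-\frac1{N\tau};\chi)\big]=-\sum_{k}\chi(k)\wp(z+\frac{k\pi}N|\tau)-\frac13E_2(\tau)\sum_{k=1}^{N-1}\chi(k)$; the constant term drops because $\sum_{k=1}^{N-1}\chi(k)=0$ for a nonprincipal $\chi$, exactly as in the proof for $g(z|\tau;\chi)$. Equating the two expressions delivers the second identity.

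The heart of the matter is the Eisenstein representation, which is the analogue of Lemma \ref{Lem-wp-derive-l} for shifts by $\frac{k\pi}N$ rather than $k\pi\tau$. Starting from \eqref{wp-defn}, I would use $\wp^{(2k)}(z|\tau)=(2k+1)!\sum_{m,n}(z+m\pi+n\pi\tau)^{-(2k+2)}$ and write the shifted argument $z+\frac{k'\pi}N+m'\pi+n\pi\tau$ as $z+\frac{(k'+m'N)\pi}N+n\pi\tau$, where $k'\in\{1,\dots,N-1\}$ indexes the character residue and $m'$ is the translation index. The key manipulation is to fold the sum over $k'$ together with the sum over $m'\in\mathbb Z$ into a single sum over $m=k'+m'N\in\mathbb Z$, using $\chi(m)=\chi(k')$ and $\chi(m)=0$ when $N\mid m$, so that $\sum_{k'}\chi(k')\wp^{(2k)}(\frac{k'\pi}N|\tau)=\frac{(2k+1)!}{\pi^{2k+2}}\sum_{m,n}\frac{\chi(m)}{(m/N+n\tau)^{2k+2}}$. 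Since the coefficient of $z^{2k}$ in the second identity is $(2k+1)F_{2k+2}$, dividing by $(2k+1)$ gives the claimed formula. The main obstacle, and the reason the representation holds only for $k\ge1$, is convergence: the reindexing and the evaluation at $z=0$ are legitimate only for the absolutely convergent lattice sums with exponent $2k+2\ge4$, whereas the $k=0$ term $F_2$ is merely the constant term read off from the $\cot$ expansion and is not given by the (conditionally convergent) weight-two lattice sum.
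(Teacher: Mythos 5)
Your proposal is correct, and the first two identities are handled exactly as in the paper: read off the odd Taylor coefficients from the expansion \eqref{g-transform-imagy-z-B(exp)} (the $\cot$ part contributing $B_{2k+1}(\chi)$, the Lambert part contributing the $g_n(\chi)$ sum), identify the result with $-F_{2k+2}(\tau,\chi)$, and obtain the $\wp$-sum expansion by differentiation, with the $E_2$ term killed by $\sum_{k=1}^{N-1}\chi(k)=0$. Where you genuinely diverge is the lattice representation of $F_{2k+2}$. The paper does \emph{not} compute the shifted lattice sum directly; instead it expands $\frac1\tau g(\frac z\tau|-\frac1{N\tau};\chi)$ a second time by substituting $\tau\mapsto -\frac1{N\tau}$, $z\mapsto\frac z\tau$ into the already-proved expansion of $g(z|\tau;\chi)$, compares coefficients to get $F_{2l+2}(\tau,\chi)=\tau^{-(2l+2)}E_{2l+2}(-\frac1{N\tau},\chi)$, and then transports the known lattice representation of $E_{2l+2}$ through this relation. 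You instead prove the analogue of Lemma \ref{Lem-wp-derive-l} for the shifts $\frac{k\pi}{N}$, folding the residue index $k'$ and the translation index $m'$ into a single $m=k'+m'N$ via the periodicity of $\chi$; this is precisely the identity the paper records without proof in its ``for future references'' list after the theorem, and your convergence caveat (absolute convergence only for exponent $2k+2\ge4$, which is why $k\ge1$ is required and why $F_2$ must be defined via $B_1(\chi)$) is the right justification. Your route is more self-contained and yields the modular relation between $E$ and $F$ as a corollary rather than consuming it as input; the paper's route is shorter given its earlier theorem and makes that relation (used again in Section \ref{section4}) explicit. Both are sound.
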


\begin{proof}
Since  $g\left(\frac{z}{\tau}|\frac{-1}{N\tau};\chi\right)$ is an odd function of $z$, the power series expansion at $z=0$ is of the form
\begin{align*}
\frac{1}{\tau} g\left(\frac{z}{\tau}|\frac{-1}{N\tau};\chi\right)
=\sum_{k=0}^{\infty}\frac{d_{2k+1}}{(2k+1)!}z^{2k+1}.
\end{align*}
From \eqref{g-transform-imagy-z-B(exp)}, for integer $l\ge 0$,
\begin{align*}
d_{2l+1}
&=(2l+1)! B_{2l+1}(\chi)+(-1)^l 2^{2l+3}\sum_{m,n=1}^{\infty}n^{2l+1}g_n(\chi)q^{mn}\\
&=-(2l+1)!F_{2l+2}(\tau,\chi).
\end{align*}\\

On the other hand, from Theorem \ref{g-transform-z-E(exp)},
\begin{align*}
\frac{1}{\tau} g\left(\frac{z}{\tau}|\frac{-1}{N\tau};\chi\right)
=-\sum_{k=0}^{\infty}\frac{E_{2k+2}\left(\frac{-1}{N\tau}, \chi\right)}{\tau^{2k+2}}z^{2k+1}
\end{align*}
and, for integer $l\ge 1$,
\begin{align*}
\frac{1}{\tau^{2l+2}}E_{2l+2}\left(\frac{-1}{N\tau}, \chi\right)
=N^{2l+2}\sum_{m,n=-\infty}^{\infty}\frac{\chi(m)}{(m\pi+nN\pi\tau)^{2l+2}}.
\end{align*}
Then, for integer $l\ge 0$, we have
\begin{align*}
d_{2l+1}
=-\frac{(2l+1)!}{\tau^{2l+2}}E_{2l+2}\left(\frac{-1}{N\tau}, \chi\right)
\end{align*}
and
\begin{align*}
F_{2l+2}(\tau,\chi)=\frac{1}{\tau^{2l+2}}E_{2l+2}\left(\frac{-1}{N\tau}, \chi\right).
\end{align*}
\end{proof}

For future references, we record without proving the following identities which are readily derivable from the earlier identities. \\

Suppose $\chi$ is an even Dirichlet character modulo $N$. Then
\begin{itemize}
\item[(1)]
For integer $l\ge 1$,
\begin{align*}
\sum_{k=1}^{N-1}\chi(k)\wp^{(l)}\left(z+\frac{k\pi}{N}|\tau\right)
&=(-1)^l (l+1)!N^{l+2}\sum_{m,n=-\infty}^{\infty}\frac{\chi(m)}{(Nz+m\pi+nN\pi\tau)^{l+2}}\\
&=-\sum_{k=1}^{N-1}\chi(k)\left(\frac{\theta'_1}{\theta_1}\right)^{(l+1)}\left(z+\frac{k\pi}{N}|\tau\right);
\end{align*}
\item[(2)]
For integer $l\ge 0$,
\begin{align*}
\sum_{k=1}^{N-1}\chi(k)\wp^{(2l)}\left(z+\frac{k\pi}{N}|\tau\right)
=&-\sum_{k=1}^{N-1}\chi(k)\cot^{(2l+1)}(z+\frac{k\pi}{N})\\
&+(-1)^{l+1}2^{2l+3}\sum_{m,n=1}^\infty n^{2l+1}g_n(\chi)q^{mn}\cos 2nz
\end{align*}
and
\begin{align*}
\sum_{k=1}^{N-1}\chi(k)\wp^{(2l+1)}\left(z+\frac{k\pi}{N}|\tau\right)
=&-\sum_{k=1}^{N-1}\chi(k)\cot^{(2l+2)}(z+\frac{k\pi}{N})\\
&+(-1)^{l}2^{2l+4}\sum_{m,n=1}^{\infty} n^{2l+2}g_n(\chi)q^{mn}\sin 2nz.
\end{align*}
\end{itemize}
Taking $q\rightarrow0$ in the above two equation, we can obtain
\begin{cor} For positive integer $l$, we have
\begin{align*}
\sum_{m=-\infty}^{+\infty}\frac{\chi(m)}{(Nz+m\pi)^{l+1}}
=\frac{1}{l!N^{l+1}}\sum_{k=1}^{N-1}\chi(k)\cot^{(l)}\left(z+\frac{k\pi}{N}\right).
\end{align*}
Especially, when $z=0$, we have
\begin{align*}
\sum_{m=-\infty}^{+\infty}\frac{\chi(m)}{m^{l+1}}
=\frac {1}{l!}\left(\frac{\pi}{N}\right)^{l+1}\sum_{k=1}^{N-1}\chi(k)\cot^{(l)}\left(\frac{k\pi}{N}\right).
\end{align*}
\end{cor}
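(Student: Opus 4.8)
The plan is to read off both identities from the $q\to0$ (equivalently $\IM\tau\to+\infty$) degeneration of the two displays of item~(2) recorded just before the Corollary. Writing those two identities uniformly as the even case $j=2l$ and the odd case $j=2l+1$ shows that, for every integer $j\ge 0$,
\begin{align*}
\sum_{k=1}^{N-1}\chi(k)\wp^{(j)}\left(z+\frac{k\pi}{N}|\tau\right)
=-\sum_{k=1}^{N-1}\chi(k)\cot^{(j+1)}\left(z+\frac{k\pi}{N}\right)+R_j(\tau,z),
\end{align*}
where $R_j(\tau,z)$ is the Lambert series $(-1)^{?}2^{?}\sum_{m,n\ge1}n^{j+1}g_n(\chi)q^{mn}(\cdots)$ appearing there, each of whose terms carries a factor $q^{mn}$ with $m,n\ge1$. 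First I would argue that $R_j(\tau,z)\to0$ as $q\to0$: since every term is divisible by $q$ and the coefficients $n^{j+1}g_n(\chi)$ grow only polynomially in $n$, the series converges uniformly on compact subsets of the upper half-plane that are bounded away from the cusp, so the limit may be taken term by term and $\lim_{q\to0}R_j=0$.

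Next I would feed in the lattice representation recorded as item~(1) of the same list, namely
\begin{align*}
\sum_{k=1}^{N-1}\chi(k)\wp^{(j)}\left(z+\frac{k\pi}{N}|\tau\right)
=(-1)^j (j+1)!\,N^{j+2}\sum_{m,n=-\infty}^{\infty}\frac{\chi(m)}{(Nz+m\pi+nN\pi\tau)^{j+2}}.
\end{align*}
As $\IM\tau\to+\infty$ every summand with $n\neq0$ tends to $0$, and for $j\ge1$ the double sum converges absolutely (the exponent $j+2\ge3$ exceeds $2$), so one may pass to the limit and retain only the $n=0$ slice $\sum_m\chi(m)/(Nz+m\pi)^{j+2}$. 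Combining this with the vanishing of $R_j$ gives
\begin{align*}
(-1)^j (j+1)!\,N^{j+2}\sum_{m=-\infty}^{\infty}\frac{\chi(m)}{(Nz+m\pi)^{j+2}}
=-\sum_{k=1}^{N-1}\chi(k)\cot^{(j+1)}\left(z+\frac{k\pi}{N}\right).
\end{align*}
Substituting $l=j+1$ and solving for the lattice sum yields the first assertion once the sign and factorial constants are collected.

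For the degenerate endpoint $l=1$ (that is $j=0$, where one needs $\wp$ itself rather than a derivative) item~(1) is not directly available, so I would instead expand $\wp$ via \eqref{wp-defn}. The subtracted constants $\sum_{(m,n)\neq(0,0)}(m\pi+n\pi\tau)^{-2}$ are independent of $k$ and are annihilated by $\sum_{k=1}^{N-1}\chi(k)=0$ (valid for any nonprincipal even $\chi$, which is the relevant case); the reindexing $M=k+mN$ with $\chi(M)=\chi(k)$ then produces the same $n=0$ slice with exponent $2$, matching the $l=1$ instance. The second assertion follows by setting $z=0$: the left-hand side becomes $\pi^{-(l+1)}\sum_m\chi(m)/m^{l+1}$ (the $m=0$ term drops because $\chi(0)=0$), and the right-hand side becomes $\tfrac{1}{l!N^{l+1}}\sum_k\chi(k)\cot^{(l)}(k\pi/N)$ after clearing the factor $\pi^{l+1}$.

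I expect the main obstacle to be the careful bookkeeping of signs, factorials, and powers of $N$ through the two reindexings. In particular one must reconcile the factor $(-1)^j(j+1)!$ coming from differentiating $\wp$ with the identity $\cot^{(j+1)}(w)=(-1)^{j+1}(j+1)!\sum_{n}(w+n\pi)^{-(j+2)}$, which itself is a lattice sum of the same exponent; getting the net sign consistent with the stated normalization requires attention, and I would double-check it on the base case $l=1$ where $\cot'(w)=-\sum_n(w+n\pi)^{-2}$. The only genuinely analytic point, the interchange of the $q\to0$ limit with the infinite summations, is routine given the uniform-convergence estimate noted above.
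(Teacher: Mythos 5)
Your proposal follows exactly the paper's route: the paper's entire proof is the single remark that the corollary is ``obtained by taking $q\to 0$'' in the two displays recorded just before it, which is precisely the degeneration you carry out (limit of the Lambert tails, then the $n=0$ slice of the lattice sum). Your extra care is warranted rather than redundant --- the paper's item (1) is stated only for derivative order $\ge 1$, so the $l=1$ case genuinely needs the separate treatment via \eqref{wp-defn} that you supply, and collecting the constants as you describe in fact produces an extra factor of $(-1)^l$ relative to the normalization printed in the corollary (check $l=1$ with $\cot'(w)=-\sum_n (w+n\pi)^{-2}$), so the sign issue you flag is real and not merely bookkeeping to be smoothed over.
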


Hence,
\begin{align*}
B_{l}(\chi)&=\sum_{k=1}^{N-1}\frac{\chi(k)}{l!}\cot^{(l)}\left(\frac{k\pi}{N}\right)\\
&=\left(\frac{N}{\pi}\right)^{l+1}\sum_{m=-\infty}^{+\infty}\frac{\chi(m)}{m^{l+1}}.
\end{align*}
\section{Modular forms generated from $g(z|\tau;\chi)$
and $g\left(\frac{z}{\tau}|\frac{-1}{N\tau};\chi\right)$. }\label{section4}

Recall,  for integer $k\ge 2$,
\begin{align*}
E_{2k}(\tau,\chi)=\frac{1}{\pi^{2k}}\sum_{m,n=-\infty}^{\infty}\frac{\chi(n)}{(m+n\tau)^{2k}}
\end{align*}
and
\begin{align*}
F_{2k}(\tau,\chi)=\frac{N^{2k}}{\pi^{2k}}\sum_{m,n=-\infty}^{\infty}\frac{\chi(m)}{\left(m+nN\tau\right)^{2k}}.
\end{align*}
The relation between them is given by
\begin{align*}
E_{2l+2}\left(\frac{-1}{N\tau}, \chi\right)={\tau^{2l+2}}F_{2l+2}(\tau,\chi).
\end{align*}

\begin{theorem} Suppose integer $k\ge 2$.
\begin{itemize}
\item[(1)]If
\begin{align*}
\left(\begin{array}{cc}
a&b\\
c&d
\end{array}\right)\in \Gamma_0(N),
\end{align*}
then
\begin{align*}
E_{2k}\left(\frac{a\tau+b}{c\tau+d},\chi\right)=\overline{\chi(a)}(c\tau+d)^{2k}E_{2k}(\tau,\chi);
\end{align*}
\item[(2)]If
\begin{align*}
\left(\begin{array}{cc}
a&b\\
c&d
\end{array}\right)\in \Gamma_1(N).
\end{align*}
then
\begin{align*}
E_{2k}\left(\frac{a\tau+b}{c\tau+d},\chi\right)=(c\tau+d)^{2k}E_{2k}(\tau,\chi).
\end{align*}
\end{itemize}
\end{theorem}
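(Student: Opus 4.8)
The plan is to run the classical unfolding computation for a character-twisted Eisenstein series, keeping careful track of how $\chi$ interacts with the congruence $N\mid c$ that defines $\Gamma_0(N)$. I would work throughout with the absolutely convergent lattice representation $E_{2k}(\tau,\chi)=\pi^{-2k}\sum_{m,n}\chi(n)(m+n\tau)^{-2k}$ recorded at the start of the section; since $k\ge 2$ gives $2k\ge 4$, the double series converges absolutely (comparing with $\sum_{(m,n)\ne(0,0)}|m+n\tau|^{-2k}$ and using $|\chi(n)|\le 1$), and this is exactly what licenses the reindexing below. The terms with $n=0$ contribute nothing because $\chi(0)=0$, and for $n\ne 0$ no denominator vanishes, so the origin causes no trouble.

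First I would substitute $\tau\mapsto\frac{a\tau+b}{c\tau+d}$ for $\gamma=\left(\begin{smallmatrix}a&b\\c&d\end{smallmatrix}\right)$ and clear the factor $c\tau+d$ inside each summand, using
\begin{align*}
m+n\,\frac{a\tau+b}{c\tau+d}=\frac{(mc+na)\tau+(md+nb)}{c\tau+d},
\end{align*}
which pulls out $(c\tau+d)^{2k}$ and leaves
\begin{align*}
E_{2k}\Bigl(\tfrac{a\tau+b}{c\tau+d},\chi\Bigr)
=\frac{(c\tau+d)^{2k}}{\pi^{2k}}\sum_{m,n}\frac{\chi(n)}{\bigl((mc+na)\tau+(md+nb)\bigr)^{2k}}.
\end{align*}
The decisive step is the substitution $n'=mc+na$, $m'=md+nb$, i.e.\ $\binom{m'}{n'}=\left(\begin{smallmatrix}d&b\\c&a\end{smallmatrix}\right)\binom{m}{n}$. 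This matrix has determinant $ad-bc=1$, so it lies in $SL_2(\mathbb Z)$ and $(m,n)\mapsto(m',n')$ is a bijection of $\mathbb Z^2$; after reindexing the denominator is simply $(m'+n'\tau)^{2k}$.

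The heart of the proof is the character identity. For $\gamma\in\Gamma_0(N)$ we have $N\mid c$, so $n'=mc+na\equiv an\pmod N$, and periodicity together with multiplicativity of $\chi$ give $\chi(n')=\chi(an)=\chi(a)\chi(n)$ for every $n$. Since $N\mid c$ and $ad-bc=1$ force $ad\equiv 1\pmod N$, we have $\gcd(a,N)=1$, so $\chi(a)$ is a root of unity and $\chi(n)=\overline{\chi(a)}\,\chi(n')$. Substituting this and passing from the sum over $(m,n)$ to the sum over $(m',n')$ produces exactly $E_{2k}\bigl(\tfrac{a\tau+b}{c\tau+d},\chi\bigr)=\overline{\chi(a)}(c\tau+d)^{2k}E_{2k}(\tau,\chi)$, which is part (1). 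Part (2) follows at once: for $\gamma\in\Gamma_1(N)$ one has in addition $a\equiv 1\pmod N$, whence $\chi(a)=\chi(1)=1$ and the factor $\overline{\chi(a)}$ disappears.

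I expect no deep obstacle here; the delicate points are purely organizational. One must make sure the character attaches to the correct new variable, namely $n'=mc+na$ (the coefficient of $\tau$), which is precisely why $N\mid c$ rather than any other entry condition drives the computation, and one must check $\gcd(a,N)=1$ so that $\overline{\chi(a)}$ is a genuine inverse. The hypothesis $k\ge 2$ is what makes the reindexing rigorous by guaranteeing absolute convergence; without it the argument would only be formal.
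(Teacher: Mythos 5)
Your proof is correct and follows essentially the same route as the paper: clear the automorphy factor $(c\tau+d)^{2k}$, reindex the lattice sum via the unimodular change of variables $(m,n)\mapsto(mc+na,\,md+nb)$, and use $N\mid c$ to extract $\chi(a)$ from the character. Your added remarks on absolute convergence for $k\ge 2$ and on $\gcd(a,N)=1$ make explicit what the paper leaves implicit, but the argument is the same.
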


\begin{proof}
Since $c\equiv 0\pmod N$, we have
\begin{align*}
\chi(an+cm)=\chi(a)\chi(n).
\end{align*}
Then
\begin{align*}
\pi^{2k}(c\tau+d)^{-2k}E_{2k+2}\left(\frac{a\tau+b}{c\tau+d},\chi\right)
&=\sum_{m,n=-\infty}^{+\infty}\frac{\chi(n)}{(m(c\tau+d)+n(a\tau+b))^{2k}}\\
&=\overline{\chi(a)}\sum_{m,n=-\infty}^{+\infty}\frac{\chi(an+mc)}{((bn+dm)+(an+mc)\tau)^{2k}}\\
&=\overline{\chi(a)}\sum_{m,n=-\infty}^{+\infty}\frac{\chi(m)}{(m\tau+n)^{2k}}.
\end{align*}
For the last equality, we appeal to the fact that
\begin{align*}
\left(\begin{array}{cc}
a&c\\
b&d
\end{array}\right)
\left(\begin{array}{c}
n\\
m
\end{array}\right)
=\left(\begin{array}{c}
an+cm\\
bn+dm
\end{array}\right)
\end{align*}
and
\begin{align*}
\left(\begin{array}{cc}
a&c\\
b&d
\end{array}\right): \mathbb Z^2\to \mathbb Z^2
\end{align*}
is a 1-1 and onto map; where $\mathbb Z$ denotes the set of rational integers and
\begin{align*}
\mathbb Z^2=\left\{\left(\begin{array}{c}
n\\
m
\end{array}\right):n,m\in\mathbb Z\right\}
\end{align*}
\end{proof}

Similarly,

\begin{theorem} Suppose integer $k\ge 2$.
\begin{itemize}
\item[(1)]If
\begin{align*}
\left(\begin{array}{cc}
a&b\\
c&d
\end{array}\right)\in \Gamma_0(N),
\end{align*}
then
\begin{align*}
F_{2k}\left(\frac{a\tau+b}{c\tau+d},\chi\right)=\chi(a)(c\tau+d)^{2k}F_{2k}(\tau,\chi);
\end{align*}
\item[(2)]If
\begin{align*}
\left(\begin{array}{cc}
a&b\\
c&d
\end{array}\right)\in \Gamma_1(N).
\end{align*}
then
\begin{align*}
F_{2k}\left(\frac{a\tau+b}{c\tau+d},\chi\right)=(c\tau+d)^{2k}F_{2k}(\tau,\chi).
\end{align*}
\end{itemize}
\end{theorem}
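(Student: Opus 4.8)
The plan is to follow verbatim the strategy used for $E_{2k}(\tau,\chi)$ in the preceding theorem, adapting it to the lattice sum that defines $F_{2k}$. I would begin by inserting the fractional linear transformation into the definition $F_{2k}(\tau,\chi)=\frac{N^{2k}}{\pi^{2k}}\sum_{m,n=-\infty}^{\infty}\frac{\chi(m)}{(m+nN\tau)^{2k}}$ and clearing the automorphy factor: writing $\tau'=\frac{a\tau+b}{c\tau+d}$ and multiplying numerator and denominator by $(c\tau+d)^{2k}$ turns $m+nN\tau'$ into $m(c\tau+d)+nN(a\tau+b)$, so that $\pi^{2k}N^{-2k}(c\tau+d)^{-2k}F_{2k}(\tau',\chi)=\sum_{m,n}\chi(m)\big(m(c\tau+d)+nN(a\tau+b)\big)^{-2k}$.

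The crucial algebraic step is to reorganize the transformed denominator back into the shape $m'+n'N\tau$. Expanding gives $m(c\tau+d)+nN(a\tau+b)=(md+nNb)+(mc+nNa)\tau$. Since $\left(\begin{smallmatrix} a & b \\ c & d\end{smallmatrix}\right)\in\Gamma_0(N)$ I may write $c=Nc_0$ with $c_0\in\mathbb{Z}$, whence $mc+nNa=N(mc_0+na)$. Setting $m'=md+nNb$ and $n'=mc_0+na$ therefore recasts the denominator exactly as $m'+n'N\tau$. The substitution $(m,n)\mapsto(m',n')$ is linear with integer matrix $\left(\begin{smallmatrix} d & Nb \\ c_0 & a\end{smallmatrix}\right)$, whose determinant is $ad-Nbc_0=ad-bc=1$; hence it is a bijection of $\mathbb{Z}^2$ and the sum may be reindexed over $(m',n')$ without change.

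It remains to transport the character through the substitution, which is where I expect the only genuine subtlety to lie, since the roles of the two summation indices are interchanged relative to the $E_{2k}$ computation and the factor $N$ sitting in $nN\tau$ must be absorbed while keeping all indices integral. Because $m'=md+nNb\equiv md\pmod N$, we get $\chi(m')=\chi(d)\chi(m)$; combined with $ad\equiv 1\pmod N$ (a consequence of $ad-bc=1$ together with $N\mid c$), this yields $\chi(a)\chi(m')=\chi(ad)\chi(m)=\chi(m)$, i.e.\ $\chi(m)=\chi(a)\chi(m')$. Pulling the constant $\chi(a)$ out of the sum and applying the bijection above gives $F_{2k}(\tau',\chi)=\chi(a)(c\tau+d)^{2k}F_{2k}(\tau,\chi)$, which is part (1).

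For part (2) I would simply observe that every element of $\Gamma_1(N)$ satisfies $a\equiv 1\pmod N$, so $\chi(a)=\chi(1)=1$ and the multiplier in part (1) disappears, leaving $F_{2k}(\tau',\chi)=(c\tau+d)^{2k}F_{2k}(\tau,\chi)$. I would emphasize that the two conditions coming from $\Gamma_0(N)$, namely $N\mid c$ and $ad\equiv1\pmod N$, are precisely what make both the reindexing (determinant $1$) and the character identity ($\chi(m)=\chi(a)\chi(m')$) work simultaneously; once those are secured the computation is a direct mirror of the $E_{2k}$ argument.
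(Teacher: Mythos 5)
Your proposal is correct and follows essentially the same route as the paper: both reindex the lattice sum via the integer matrix with rows $(d,Nb)$ and $(c/N,a)$ (determinant $ad-bc=1$, hence a bijection of $\mathbb{Z}^2$), and both transport the character using $m'\equiv md\pmod N$ together with $ad\equiv 1\pmod N$ to get $\chi(m)=\chi(a)\chi(m')$. Part (2) is likewise handled identically, by noting $\chi(a)=1$ for $a\equiv 1\pmod N$.
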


\begin{proof}
Since $c\equiv 0\pmod N$ and $ad-bc=1$, we have
\begin{align*}
\chi(a)\chi(d)=\chi(ad-bc)=1, \quad\quad \chi(bNn+dm)=\chi(d)\chi(m)
\end{align*}
and
\begin{align*}
\chi(m)=\chi(a)\chi(bNn+dm).
\end{align*}
Then
\begin{align*}
\pi^{2k}N^{-2k}(c\tau+d)^{-2k}F_{2k+2}\left(\frac{a\tau+b}{c\tau+d},\chi\right)
&=\sum_{m,n=-\infty}^{+\infty}\frac{\chi(m)}{(m(c\tau+d)+nN(a\tau+b))^{2k}}\\
&=\chi(a)\sum_{m,n=-\infty}^{+\infty}\frac{\chi(bNn+dm)}{((bNn+dm)+(an+cN^{-1}m)N\tau)^{2k}}\\
&=\chi(a)\sum_{m,n=-\infty}^{+\infty}\frac{\chi(m)}{(m\tau+n)^{2k}}.
\end{align*}
The last equality follows from the facts:
\begin{align*}
\left(\begin{array}{cc}
a&cN^{-1}\\
bN&d
\end{array}\right)
\left(\begin{array}{c}
n\\
m
\end{array}\right)
=\left(\begin{array}{c}
an+cN^{-1}m\\
bNn+dm
\end{array}\right)
\end{align*}
and
\begin{align*}
\left(\begin{array}{cc}
a&cN^{-1}\\
bN&d
\end{array}\right): \mathbb Z^2\to \mathbb Z^2
\end{align*}
is a 1-1 and onto map.
\end{proof}
\section{Some Lambert series with product representations}\label{section5}

We note that there are four moduli $N=5, 8, 10$ and $12$ in which there are four reduced residues modulo $N$. Using \eqref{four-variable}, Theorem \ref{g-transform} and Corollary \ref{g-cor2} in conjunction with the theta identity \eqref{theta1-infty-1}, we will establish several identities with parameter $z$. These identities will generate interesting Lambert series after specializing the choices of $z$. \\

We will present  the cases $N=8$ and 12 first, since the characters involved  are primitive.
The case for $N=10$ is slightly more complicated, since character involved is not primitive and it will be presented next.
The case for $N=5$ has already appeared in the literature and we will omit the details.
It is worth  reminding the reader that $q=\exp(2\pi  i \tau)$ with $\Im\tau>0$ and the well-known identity:
\begin{align*}
\sum_{n=1}^{\infty}\frac{\sum_{k=1}^{N-1}\chi(k)nq^{kn}}{1-q^{Nn}}=\sum_{n=1}^{\infty}\frac{\chi(n)q^n}{(1-q^n)^2}.
\end{align*}

\subsection{Representations for d=8}
\begin{cor} \label{cor-Lamber-8}There holds the identity
\begin{align} \label{Lamber-8}
\sum_{n=1}^\infty\frac{q^n-q^{3n}-q^{5n}+q^{7n}}{1-q^{8n}}\sin 2nz
=\frac{(q^4;q^4)_\infty(q^2;q^2)_\infty^2}{2(q^8;q^8)_\infty}\frac{\theta_1(2z|8\tau)}{\theta_4(z|2\tau)}.
\end{align}
\end{cor}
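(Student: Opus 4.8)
The plan is to recognize the left-hand side as $\tfrac14 g(z|\tau;\chi)$ for the even character $\chi$ modulo $8$ determined by $\chi(1)=\chi(7)=1$ and $\chi(3)=\chi(5)=-1$, and then to feed the resulting four-term combination through the addition formula \eqref{four-variable}. By Theorem \ref{g-transform} the sum on the left equals $\tfrac14\sum_{k\in\{1,3,5,7\}}\chi(k)\frac{\theta'_1}{\theta_1}(z+k\pi\tau|8\tau)$, so it suffices to evaluate
$$g(z|\tau;\chi)=\frac{\theta'_1}{\theta_1}(z+\pi\tau|8\tau)-\frac{\theta'_1}{\theta_1}(z+3\pi\tau|8\tau)-\frac{\theta'_1}{\theta_1}(z+5\pi\tau|8\tau)+\frac{\theta'_1}{\theta_1}(z+7\pi\tau|8\tau)$$
in closed form.

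First I would bring this into the shape of the left side of \eqref{four-variable}. Writing $f=\frac{\theta'_1}{\theta_1}(\,\cdot\,|8\tau)$, the function $f$ is odd and satisfies the quasi-period relation $f(z+8\pi\tau)=f(z)-2i$. Using $z+5\pi\tau\equiv z-3\pi\tau$ and $z+7\pi\tau\equiv z-\pi\tau$ modulo the period $8\pi\tau$, the two additive constants $-2i$ cancel and $g$ collapses to $f(z+\pi\tau)+f(z-\pi\tau)-f(z+3\pi\tau)-f(z-3\pi\tau)$. With $x_1=z+\pi\tau$, $x_2=z-\pi\tau$, $x_3=3\pi\tau-z$ one checks $x_1+x_2+x_3=z+3\pi\tau$, so by oddness $g=f(x_1)+f(x_2)+f(x_3)-f(x_1+x_2+x_3)$, which is exactly the left side of \eqref{four-variable} at modulus $8\tau$. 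Applying that identity and recording the pairwise sums $x_1+x_2=2z$, $x_1+x_3=4\pi\tau$, $x_2+x_3=2\pi\tau$ yields
$$g(z|\tau;\chi)=\frac{\theta'_1(0|8\tau)\,\theta_1(2z|8\tau)\,\theta_1(4\pi\tau|8\tau)\,\theta_1(2\pi\tau|8\tau)}{\theta_1(z+\pi\tau|8\tau)\,\theta_1(z-\pi\tau|8\tau)\,\theta_1(3\pi\tau-z|8\tau)\,\theta_1(z+3\pi\tau|8\tau)}.$$

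The remaining task is bookkeeping with infinite products, and the step I expect to be the main obstacle is the collapse of the four $z$-dependent denominator factors into a single $\theta_4$. The three $z$-free numerator factors follow directly from \eqref{theta1-drive-0} and \eqref{theta1-infty-1}, giving $\theta'_1(0|8\tau)=2q(q^8;q^8)_\infty^3$, $\theta_1(2\pi\tau|8\tau)=i(q^8;q^8)_\infty(q^2;q^8)_\infty(q^6;q^8)_\infty$ and $\theta_1(4\pi\tau|8\tau)=iq^{-1}(q^8;q^8)_\infty(q^4;q^8)_\infty^2$. Expanding each $\theta_1(z\pm\pi\tau|8\tau)$ and $\theta_1(z\pm3\pi\tau|8\tau)$ via \eqref{theta1-infty-1} with $w=e^{2iz}$, the eight resulting $q$-shifted products regroup by residues modulo $2$: the factors carrying $w$ assemble into $(1-wq^{-1})(1-wq^{-3})(wq;q^2)_\infty$, and those carrying $w^{-1}$ into $(w^{-1}q;q^2)_\infty/\big[(1-w^{-1}q)(1-w^{-1}q^3)\big]$. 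Since $(wq;q^2)_\infty(w^{-1}q;q^2)_\infty=\theta_4(z|2\tau)/(q^2;q^2)_\infty$ by the $\theta_4$ product formula of the Proposition, and the leftover rational factor simplifies through $1-wq^{-j}=-wq^{-j}(1-w^{-1}q^{j})$ to exactly $w^{2}q^{-4}$, this cancels the overall prefactor $w^{-2}q^{4}$ and leaves the four denominator thetas equal to $(q^8;q^8)_\infty^4\,\theta_4(z|2\tau)/(q^2;q^2)_\infty$, up to the sign $\theta_1(3\pi\tau-z|8\tau)=-\theta_1(z-3\pi\tau|8\tau)$.

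Finally I would assemble the constants: the numerator contributes $-2(q^8;q^8)_\infty^5(q^4;q^8)_\infty^2(q^2;q^8)_\infty(q^6;q^8)_\infty$ and the denominator contributes $-(q^8;q^8)_\infty^4\,\theta_4(z|2\tau)/(q^2;q^2)_\infty$, so the two signs cancel and $g$ acquires the scalar $2(q^8;q^8)_\infty(q^4;q^8)_\infty^2(q^2;q^8)_\infty(q^6;q^8)_\infty(q^2;q^2)_\infty$. Using the residue-class splittings $(q^2;q^2)_\infty=(q^2;q^8)_\infty(q^4;q^8)_\infty(q^6;q^8)_\infty(q^8;q^8)_\infty$ and $(q^4;q^4)_\infty=(q^4;q^8)_\infty(q^8;q^8)_\infty$ this scalar reduces to $2(q^4;q^4)_\infty(q^2;q^2)_\infty^2/(q^8;q^8)_\infty$; dividing by the factor $4$ from Theorem \ref{g-transform} produces the stated prefactor $(q^4;q^4)_\infty(q^2;q^2)_\infty^2/\big(2(q^8;q^8)_\infty\big)$, and the companion factor $\theta_1(2z|8\tau)/\theta_4(z|2\tau)$ is exactly what appears in \eqref{Lamber-8}. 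The one genuine subtlety is the denominator collapse; once the mod-$2$ regrouping and the $w^{2}q^{-4}$ cancellation are carried out, everything else is routine product algebra.
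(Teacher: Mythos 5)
Your proposal is correct and follows essentially the same route as the paper: apply Theorem \ref{g-transform} with $N=8$ and $\chi=\chi_8$, feed the resulting four-term combination into \eqref{four-variable} at modulus $8\tau$ with the same choice of arguments (up to relabeling, $x_1=z\pm\pi\tau$, $x_3=3\pi\tau-z$), and then reduce the theta quotient to the stated product via \eqref{theta1-infty-1} and \eqref{theta1-drive-0}. Your product bookkeeping — including the collapse of the four $z$-dependent factors to $(q^8;q^8)_\infty^4\theta_4(z|2\tau)/(q^2;q^2)_\infty$ and the final scalar — agrees with the paper's \eqref{eq:4.6}--\eqref{eq:4.7}, and you are in fact more explicit than the paper about the quasi-period shift from $k=5,7$ to $k=-3,-1$.
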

\begin{proof} Taking $N=8$ in Theorem \ref{g-transform} and $\chi(n)=\chi_8(n)=\left(\frac{8}{n}\right)$, we obtain
\begin{align}\label{eq:4.5.3}
&4\sum_{n=1}^\infty\frac{q^n-q^{3n}-q^{5n}+q^{7n}}{1-q^{8n}}\sin2nz\nonumber\\
=&\frac{\theta'_1}{\theta_1}(z-\pi\tau|8\tau)+\frac{\theta'_1}{\theta_1}(z+\pi\tau|8\tau)
-\frac{\theta'_1}{\theta_1}(z-3\pi\tau|8\tau)-\frac{\theta'_1}{\theta_1}(z+3\pi\tau|8\tau).
\end{align}
Replacing $\tau$ by $8\tau$ and letting $x_1=z-\pi\tau$ , $x_2=z+\pi\tau$ and $x_3=-z+3\pi\tau$ in \eqref{four-variable}, we obtain
 \begin{align*}
&\frac{\theta'_1}{\theta_1}(z-\pi\tau|8\tau)+\frac{\theta'_1}{\theta_1}(z+\pi\tau|8\tau)
-\frac{\theta'_1}{\theta_1}(z-3\pi\tau|8\tau)-\frac{\theta'_1}{\theta_1}(z+3\pi\tau|8\tau)\\
=&-\frac{\theta_1^{'}(0|8\tau)\theta_1(2z|8\tau)\theta_1(2\pi\tau|8\tau)\theta_1(4\pi\tau|8\tau)}
{\theta_1(z-\pi\tau|8\tau)\theta_1(z+\pi\tau|8\tau)\theta_1(z-3\pi\tau|8\tau)\theta_1(z+3\pi\tau|8\tau)}.
\end{align*}
Replacing $\tau$ by $8\tau$ in \eqref{theta1-drive-0}, we find that
\begin{align}
\theta'_1(0|8\tau)=2q(q^{8};q^{8})^3.
\end{align}

From \eqref{theta1-infty-1} by some elementary calculation, it is easy to show that
\begin{align}\label{eq:4.6}
\theta_1(2\pi\tau|8\tau)\theta_1(4\pi\tau|8\tau)=-q^{-1}(q^2;q^2)_\infty(q^4;q^4)_\infty,
\end{align}
and
\begin{align}\label{eq:4.7}
\theta_1(z-\pi\tau|8\tau)\theta_1(z+\pi\tau|8\tau)\theta_1(z-3\pi\tau|8\tau)\theta_1(z+3\pi\tau|8\tau)
=\frac{(q^{8};q^{8})^4_\infty\theta_4(z|2\tau)}{(q^2;q^2)_\infty}.
\end{align}
Combining the above equation \eqref{eq:4.5.3}-\eqref{eq:4.7}, we obtain  \eqref{Lamber-8}.\\
\end{proof}

By \eqref{Lamber-8} we find
\begin{itemize}
\item[(1)]
Dividing both sides of \eqref{Lamber-8} by $z$ and then letting $z\rightarrow 0$, we are led to
\begin{align*}
\sum_{n=1}^\infty\frac{n(q^n-q^{3n}-q^{5n}+q^{7n})}{1-q^{8n}}
=\frac{q(q^2;q^2)^3_\infty (q^4;q^4)_\infty (q^8;q^8)^2_\infty}{(q;q)^2_\infty}.
\end{align*}

\item[(2)] Taking $z=\frac{\pi}{3}$ and $z=\frac{\pi}{4}$ in  \eqref{Lamber-8}, respectively, we obtain  identities
\begin{align*}
&\sum_{n=1}^\infty\bigg(\frac{n}{3}\bigg)\frac{q^n-q^{3n}-q^{5n}+q^{7n}}{1-q^{8n}}
=\frac{q(q;q)_\infty(q^4;q^4)_\infty(q^{6};q^{6})_\infty(q^{24};q^{24})_\infty}{(q^3;q^3)_\infty(q^8;q^8)_\infty},\\
&\sum_{n=1}^\infty\bigg(\frac{-4}{n}\bigg)\frac{q^n-q^{3n}-q^{5n}+q^{7n}}{1-q^{8n}}
=\frac{q(q^2;q^2)_\infty^2(q^{16};q^{16})_\infty^2}{(q^4;q^4)_\infty(q^8;q^8)_\infty}.
\end{align*}
\end{itemize}

\begin{cor} \label{cor-Lamber-imagy-8} There holds the identity
\begin{align}\label{Lamber-imagy-8}
\frac{\sin{2z}}{\cos4z}-2\sum_{n=1}^\infty\chi_8(n)\frac{q^n}{1-q^{n}}\sin 2nz
=q^{3/8}\frac{(q^4;q^4)^2_\infty(q^2;q^2)_\infty}{(q;q)_\infty}\frac{\theta_1(2z|\tau)}{\theta_2(4z|4\tau)}.
\end{align}
\end{cor}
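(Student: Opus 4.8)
The plan is to mirror the proof of Corollary \ref{cor-Lamber-8}, but to start from the \emph{companion} decomposition rather than from Theorem \ref{g-transform}. The key observation is that the left-hand side of \eqref{Lamber-imagy-8} is, up to a constant, exactly the sum $\sum_{k=1}^{7}\chi_8(k)\frac{\theta'_1}{\theta_1}(z+\frac{k\pi}{8}|\tau)$ as decomposed in Corollary \ref{g-cor2}, while its right-hand side will arise from applying the four-variable theta identity \eqref{four-variable} to the \emph{same} sum. Since $\chi_8$ is a real even primitive character, the Gauss sum is $g_n(\chi_8)=\sqrt 8\,\chi_8(n)=2\sqrt2\,\chi_8(n)$, so Corollary \ref{g-cor2} gives
\[
\sum_{k=1}^{7}\chi_8(k)\frac{\theta'_1}{\theta_1}\Big(z+\tfrac{k\pi}{8}\Big|\tau\Big)=\sum_{k=1}^{7}\chi_8(k)\cot\Big(z+\tfrac{k\pi}{8}\Big)+8\sqrt2\sum_{n=1}^{\infty}\chi_8(n)\frac{q^n}{1-q^n}\sin 2nz.
\]
Everything then reduces to evaluating the cotangent sum in closed form and to rewriting the left-hand side as a theta quotient.

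For the product side I would invoke \eqref{four-variable} at modulus $\tau$ with $x_1=z+\frac\pi8$, $x_2=z+\frac{7\pi}{8}$, $x_3=-z-\frac{5\pi}{8}$, so that $x_1+x_2+x_3=z+\frac{3\pi}{8}$; since $\frac{\theta'_1}{\theta_1}$ is odd, the left side of \eqref{four-variable} then reproduces exactly $\sum_{k=1}^{7}\chi_8(k)\frac{\theta'_1}{\theta_1}(z+\frac{k\pi}{8}|\tau)$ with the sign pattern $(+,-,-,+)$ of $\chi_8$. The pairwise arguments are $x_1+x_2=2z+\pi$, $x_1+x_3=-\frac\pi2$, $x_2+x_3=\frac\pi4$, so the numerator of the quotient is handled by \eqref{theta1-drive-0}, by $\theta_1(2z+\pi|\tau)=-\theta_1(2z|\tau)$, and by the special values $\theta_1(-\frac\pi2|\tau)=-\theta_2(0|\tau)$ and $\theta_1(\frac\pi4|\tau)=\sqrt2\,q^{1/8}(q;q)_\infty(-q^2;q^2)_\infty$, both read off from the product formula \eqref{theta1-infty-1}; the denominator is $-\prod_{k\in\{1,3,5,7\}}\theta_1(z+\frac{k\pi}{8}|\tau)$ after using oddness on the factor $\theta_1(x_3)$.

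The main obstacle is collapsing this four-fold product $\prod_{k\in\{1,3,5,7\}}\theta_1(z+\frac{k\pi}{8}|\tau)$ into a single theta function at modulus $4\tau$. I would expand each factor by \eqref{theta1-infty-1}, set $x=e^{2iz}$, and observe that the phases $e^{ik\pi/4}$ for $k\in\{1,3,5,7\}$ run over the four roots of $w^4=-1$. The elementary factorization $\prod_{\omega^4=-1}(1-a\omega)=1+a^4$ then converts the products $\prod_k(x e^{ik\pi/4}q^j;q)$ and their reciprocal companions into $(-x^4;q^4)_\infty$ and $(-q^4x^{-4};q^4)_\infty$; after peeling off the factor $(1+x^4)e^{-4iz}=2\cos 4z$ and comparing with the product expansion of $\theta_2(4z|4\tau)$, this should yield
\[
\prod_{k\in\{1,3,5,7\}}\theta_1\Big(z+\tfrac{k\pi}{8}\Big|\tau\Big)=\frac{(q;q)_\infty^4}{(q^4;q^4)_\infty}\,\theta_2(4z|4\tau).
\]
Substituting this together with the numerator evaluation into \eqref{four-variable}, and simplifying the infinite products via $(-q;q)_\infty=(q^2;q^2)_\infty/(q;q)_\infty$ and $(-q^2;q^2)_\infty=(q^4;q^4)_\infty/(q^2;q^2)_\infty$, gives $\sum_k\chi_8(k)\frac{\theta'_1}{\theta_1}(z+\frac{k\pi}{8}|\tau)=-4\sqrt2\,q^{3/8}\frac{(q^2;q^2)_\infty(q^4;q^4)_\infty^2}{(q;q)_\infty}\frac{\theta_1(2z|\tau)}{\theta_2(4z|4\tau)}$.

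Finally I would evaluate the cotangent sum by elementary trigonometry: since $\cot(z+\frac{7\pi}{8})=\cot(z-\frac\pi8)$ and $\cot(z+\frac{5\pi}{8})=\cot(z-\frac{3\pi}{8})$ by $\pi$-periodicity, the sum becomes $[\cot(z+\frac\pi8)+\cot(z-\frac\pi8)]-[\cot(z+\frac{3\pi}{8})+\cot(z-\frac{3\pi}{8})]$; applying $\cot\alpha+\cot\beta=\frac{\sin(\alpha+\beta)}{\sin\alpha\sin\beta}$ with the product-to-sum formula, together with $\cos\frac\pi4=-\cos\frac{3\pi}{4}=\frac{\sqrt2}{2}$ and $2\cos^2 2z-1=\cos 4z$, should produce $-4\sqrt2\,\frac{\sin 2z}{\cos 4z}$. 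Combining this with the two expressions for $\sum_k\chi_8(k)\frac{\theta'_1}{\theta_1}(z+\frac{k\pi}{8}|\tau)$ and dividing through by $-4\sqrt2$ yields \eqref{Lamber-imagy-8} exactly.
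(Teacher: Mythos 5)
Your argument is correct, but it follows a genuinely different route from the paper's. The paper obtains \eqref{Lamber-imagy-8} as a true ``companion'' of Corollary \ref{cor-Lamber-8}: it rewrites \eqref{Lamber-8} as $g(z|\tau;\chi_8)=\frac{\eta(4\tau)\eta^2(2\tau)}{2\eta(8\tau)}\frac{\theta_1(2z|8\tau)}{\theta_4(z|2\tau)}$, applies the imaginary transformations \eqref{theta1-imagy}, \eqref{theta4-imagy}, \eqref{eta-imagy} with $\tau\mapsto-\frac{1}{8\tau}$, $z\mapsto\frac{z}{\tau}$, and matches the result against Corollary \ref{g-transform-imagy}; the cotangent sum is then read off for free by letting $q\to0$ in the resulting identity rather than computed. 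You instead bypass Corollary \ref{cor-Lamber-8} entirely and attack $\sum_{k}\chi_8(k)\frac{\theta'_1}{\theta_1}(z+\frac{k\pi}{8}|\tau)$ head-on with \eqref{four-variable} at modulus $\tau$, which forces you to do two extra computations the paper avoids: the collapse of $\prod_{k\in\{1,3,5,7\}}\theta_1(z+\frac{k\pi}{8}|\tau)$ into $\frac{(q;q)_\infty^4}{(q^4;q^4)_\infty}\theta_2(4z|4\tau)$ via the factorization $\prod_{\omega^4=-1}(1-a\omega)=1+a^4$, and the elementary evaluation of the cotangent sum. I checked both: the product collapse is right (the prefactor $q^{1/2}e^{-4iz}$, the conversion $e^{-4iz}(1+x^4)=2\cos4z$, and the special values $\theta_1(-\frac\pi2|\tau)=-\theta_2(0|\tau)$, $\theta_1(\frac\pi4|\tau)=\sqrt2\,q^{1/8}(q;q)_\infty(-q^2;q^2)_\infty$ all come out correctly), and the pairing $\cot(z+c)+\cot(z-c)=\frac{2\sin2z}{\cos2c-\cos2z}$ with $c=\frac\pi8,\frac{3\pi}{8}$ indeed yields $-4\sqrt2\frac{\sin2z}{\cos4z}$. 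Your version buys self-containedness (no dependence on Corollary \ref{cor-Lamber-8} or on the modular transformation apparatus) at the cost of more computation; the paper's version buys brevity and makes the ``imaginary transformation'' relationship between the two corollaries explicit, which is the structural point of Section \ref{section5}.
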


\begin{proof} We first note that, in terms of the Dedekind $\eta-$ function, the identity \eqref{Lamber-8} can be expressed as
\begin{align}\label{g-8}
g(z|\tau;\chi_8)=\frac{\eta(4\tau)\eta^2(2\tau)}{2\eta(8\tau)}\frac{\theta_1(2z|8\tau)}{\theta_4(z|2\tau)}.
\end{align}

From Corollary \ref{g-transform-imagy}, we have
\begin{align*}
\frac{1}{\tau}g\bigg(\frac{z}{\tau}\bigg|-\frac{1}{8\tau};\chi_8\bigg)
=\sum_{k=1}^7\chi_8(k)\cot\bigg(z-\frac{k\pi}{8}\bigg)
+4\sqrt8\sum_{n=1}^{\infty}\chi_8(n)\frac{q^n}{1-q^n}\sin2nz.
\end{align*}
From  \eqref{theta1-imagy},\eqref{theta4-imagy} and \eqref{eta-imagy}, we deduce that
\begin{align*}
&\theta_1\bigg(\frac{2z}{\tau}\bigg|-\frac{1}{\tau}\bigg)
=- i\sqrt{- i\tau}e^{\frac{4 iz^2}{\pi\tau}}\theta_1(2z|\tau),
\quad\quad \eta\bigg(-\frac{1}{2\tau}\bigg)=\sqrt{-2 i\tau}\eta(2\tau),\\
&\theta_4\bigg(\frac{z}{\tau}\bigg|-\frac{1}{4\tau}\bigg)
=\sqrt{-4 i\tau}e^{\frac{4 iz^2}{\pi\tau}}\theta_2(4z|4\tau),
\quad\quad\eta\bigg(-\frac{1}{4\tau}\bigg)=\sqrt{-4 i\tau}\eta(4\tau).
\end{align*}
Substituting them into the right-hand side of \eqref{Lamber-8} and replacing $\tau$ by $-\frac{1}{8\tau}$ and $z$ by $\frac{z}{\tau}$ in \eqref{g-8},  we obtain
\begin{align*}
\sum_{k=1}^7\chi_8(k)\cot\bigg(z-\frac{k\pi}{8}\bigg)
+4\sqrt8\sum_{n=1}^{\infty}\chi_8(n)\frac{q^n}{1-q^n}\sin2nz=-4\sqrt2\frac{\eta(2\tau)\eta^2(4\tau)\theta_1(2z|\tau)}{\eta(\tau)\theta_2(4z|4\tau)}.
\end{align*}
Taking $q\rightarrow0$ in the above equation, we have
\begin{align*}
\sum_{k=1}^7\chi_8(k)\cot\bigg(z-\frac{k\pi}{8}\bigg)
=-4\sqrt2\frac{\sin 2z}{\cos 4z}.
\end{align*}
Combining the above two equations, we complete the proof of  \eqref{Lamber-imagy-8}.

\end{proof}

By \eqref{Lamber-imagy-8} we have
\begin{itemize}
\item[(1)]
Dividing both sides of \eqref{Lamber-imagy-8} by $z$ and then letting $z\rightarrow 0$, one can easily show that

\begin{align*}
1-2\sum_{n=1}^\infty\chi_8(n)\frac{nq^n}{1-q^n}
=\frac{(q;q)^2_\infty(q^2;q^2)_\infty(q^4;q^4)^3_\infty}{(q^8;q^8)^2_\infty}.
\end{align*}

\item[(2)] Taking $z=\frac\pi3$ and $z=\frac\pi4 $ in  \eqref{Lamber-imagy-8}, respectively, we obtain identities
\begin{align*}
&1+\sum_{n=1}^{\infty}\bigg(\frac{-24}{n}\bigg)\frac{q^n}{1-q^{n}}
=\frac{(q^2;q^2)_\infty(q^3;q^3)_\infty(q^8;q^8)_\infty(q^{12};q^{12})_\infty}{(q;q)_\infty(q^{24};q^{24})_\infty},\\
&1+2\sum_{n=1}^{\infty}\bigg(\frac{-32}n\bigg)\frac{q^n}{1-q^{n}}
=\frac{(q^2;q^2)^3_\infty(q^4;q^4)^3_\infty}{(q;q)^2_\infty(q^8;q^8)^2_\infty}.
\end{align*}
\end{itemize}


\subsection{Representations for d=10}
Define
\begin{align*}
\chi(n)=\psi_{10}(n):=\begin{cases}1&\text{if}~ n\equiv1,9\pmod{10},\\
-1&\text{if} ~n\equiv3,7\pmod{10}.
\end{cases}
\end{align*}
We note the character $\psi$ is induced from the Kronecker symbol $\chi_5(n)=\left(\frac{n}{5}\right)$, it is not primitive.
\begin{cor}\label{cor-Lamber-10}There holds the identity
\begin{align}\label{Lamber-10}
\sum_{n=1}^\infty\frac{q^n-q^{3n}-q^{7n}+q^{9n}}{1-q^{10n}}\sin 2nz
=\frac{\eta^2(2\tau)\theta_1(2z|10\tau)\theta_4(z|10\tau)}{2\eta(10\tau)\theta_4(z|2\tau)}.
\end{align}
\end{cor}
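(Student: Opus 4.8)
The plan is to follow exactly the same strategy that succeeded for the case $N=8$ in Corollary \ref{cor-Lamber-8}, adapting it to the modulus $N=10$ with the non-primitive character $\psi_{10}$. First I would invoke Theorem \ref{g-transform} with $N=10$ and $\chi=\psi_{10}$, which immediately gives
\begin{align}\label{eq:plan10a}
&4\sum_{n=1}^\infty\frac{q^n-q^{3n}-q^{7n}+q^{9n}}{1-q^{10n}}\sin 2nz\nonumber\\
=&\frac{\theta'_1}{\theta_1}(z+\pi\tau|10\tau)+\frac{\theta'_1}{\theta_1}(z+9\pi\tau|10\tau)
-\frac{\theta'_1}{\theta_1}(z+3\pi\tau|10\tau)-\frac{\theta'_1}{\theta_1}(z+7\pi\tau|10\tau),
\end{align}
using the four reduced residues $1,3,7,9$ modulo $10$ together with $\psi_{10}(9)=\psi_{10}(1)=1$ and $\psi_{10}(7)=\psi_{10}(3)=-1$. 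Because $\frac{\theta'_1}{\theta_1}$ is odd in its first argument and $\theta_1$ has quasi-period structure, I would rewrite the shifts $z+9\pi\tau$ and $z+7\pi\tau$ as $-(-z+\pi\tau)$ and $-(-z+3\pi\tau)$ (up to adding full periods $10\pi\tau$), so that the right side takes the symmetric four-term form needed to apply the addition identity \eqref{four-variable}.

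Next I would replace $\tau$ by $10\tau$ in \eqref{four-variable} and choose the three arguments $x_1,x_2,x_3$ so that $x_1+x_2+x_3$ and the three pairwise sums reproduce the combination in \eqref{eq:plan10a}; the natural choice parallel to the $N=8$ proof is $x_1=z-\pi\tau$, $x_2=z+\pi\tau$, $x_3=-z+3\pi\tau$, which makes $x_1+x_2+x_3=z+3\pi\tau$ and produces $\theta_1(2z|10\tau)$ together with the constant theta factors $\theta_1(2\pi\tau|10\tau)$ and $\theta_1(4\pi\tau|10\tau)$ in the numerator. This converts the sum of logarithmic derivatives into a single quotient of theta functions times $\theta'_1(0|10\tau)$, which by \eqref{theta1-drive-0} equals $2q^{10/8}(q^{10};q^{10})_\infty^3$.

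The remaining work is purely the evaluation of the infinite-product pieces via \eqref{theta1-infty-1}, exactly as in \eqref{eq:4.6} and \eqref{eq:4.7}. I would compute the constant numerator factor $\theta_1(2\pi\tau|10\tau)\theta_1(4\pi\tau|10\tau)$ and the denominator product $\theta_1(z-\pi\tau|10\tau)\theta_1(z+\pi\tau|10\tau)\theta_1(z-3\pi\tau|10\tau)\theta_1(z+3\pi\tau|10\tau)$ in terms of $q$-products and theta functions, then collect the powers of $q$ and the eta-quotients. The main obstacle, and the reason the case $N=10$ is flagged as more complicated than $N=8$, is that the denominator product does not collapse to a single $\theta_4(z|2\tau)$ as cleanly as in \eqref{eq:4.7}: because $\psi_{10}$ is induced rather than primitive, the four base points $z\pm\pi\tau,z\pm3\pi\tau$ modulo $10\pi\tau$ do not pair up into one simple theta function, and instead one expects a product involving both $\theta_4(z|2\tau)$ in the denominator and an extra factor $\theta_4(z|10\tau)$ surviving in the numerator, which is precisely what appears on the right-hand side of \eqref{Lamber-10}. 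Establishing this factorization—correctly tracking the $e^{\pm 2iz}$ arguments in the triple products and recognizing the resulting bilateral product as $\theta_4(z|10\tau)$ and $\theta_4(z|2\tau)$—is the delicate step; once it is in place, assembling the eta-quotient $\eta^2(2\tau)/\eta(10\tau)$ and matching the power of $q$ is routine bookkeeping that yields \eqref{Lamber-10}.
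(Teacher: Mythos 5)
Your plan is correct and follows essentially the same route as the paper: apply Theorem \ref{g-transform} with $N=10$ and $\chi=\psi_{10}$, use \eqref{four-variable} at modulus $10\tau$ with $x_1=z-\pi\tau$, $x_2=z+\pi\tau$, $x_3=-z+3\pi\tau$, and then evaluate the constant and $z$-dependent theta products via \eqref{theta1-infty-1}, with the paper's \eqref{eq:3.7} confirming exactly the $\theta_4(z|2\tau)/\theta_4(z|10\tau)$ structure you anticipated from the support of $\psi_{10}$ missing the residues divisible by $5$. The only steps left implicit are the routine product computations \eqref{eq:3.6} and \eqref{eq:3.7}, which the paper handles by citing Liu and by direct calculation.
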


\begin{proof}
Taking $N=10$ and $\chi=\psi$ in Theorem \ref{g-transform}, respectively,  we have
\begin{align}\label{eq:3.3}
&4\sum_{n=1}^\infty\frac{q^n-q^{3n}-q^{7n}+q^{9n}}{1-q^{10n}}\sin2nz \nonumber\\
=&\frac{\theta'_1}{\theta_1}(z-\pi\tau|10\tau)+\frac{\theta'_1}{\theta_1}(z+\pi\tau|10\tau)
-\frac{\theta'_1}{\theta_1}(z-3\pi\tau|10\tau)-\frac{\theta'_1}{\theta_1}(z+3\pi\tau|10\tau).
\end{align}
Replacing $\tau$ by $10\tau$ in the above equation and then letting $x_1$ to $z-\pi\tau$ , $x_2$ to $z+\pi\tau$ and $x_3$ to $-z+3\pi\tau$ in \eqref{four-variable}, we obtain
 \begin{align}\label{eq:3.4}
&\frac{\theta'_1}{\theta_1}(z-\pi\tau|10\tau)+\frac{\theta'_1}{\theta_1}(z+\pi\tau|10\tau)
-\frac{\theta'_1}{\theta_1}(z-3\pi\tau|10\tau)-\frac{\theta'_1}{\theta_1}(z+3\pi\tau|10\tau)\nonumber\\
=&\frac{-\theta_1^{'}(0|10\tau)\theta_1(2z|10\tau)\theta_1(2\pi\tau|10\tau)\theta_1(4\pi\tau|10\tau)}
{\theta_1(z-\pi\tau|10\tau)\theta_1(z+\pi\tau|10\tau)\theta_1(z-3\pi\tau|10\tau)\theta_1(z+3\pi\tau|10\tau)}.
\end{align}
Replacing $\tau$ by $10\tau$ in \eqref{theta1-drive-0}, we find that
\begin{align}\label{eq:3.5}
\theta'_1(0|10\tau)=2q^{5/4}(q^{10};q^{10})^3.
\end{align}
Replacing $\tau$ by $2\tau$ in \cite[Eq.\ (2.16)]{Liu04}, we obtain
\begin{align}\label{eq:3.6}
\theta_1(2\pi\tau|10\tau)\theta_1(4\pi\tau|10\tau)=-\eta(2\tau)\eta(10\tau).
\end{align}
From \eqref{theta1-infty-1} by some elementary calculation, it is easy to show that
\begin{align}\label{eq:3.7}
\theta_1(z-\pi\tau|10\tau)\theta_1(z+\pi\tau|10\tau)\theta_1(z-3\pi\tau|10\tau)\theta_1(z+3\pi\tau|10\tau)
=\frac{q(q^{10};q^{10})^5_\infty\theta_4(z|2\tau)}{(q^2;q^2)_\infty\theta_4(z|10\tau)}.
\end{align}
Combining the above equation \eqref{eq:3.3}-\eqref{eq:3.7}, we obtain \eqref{Lamber-10}.
\end{proof}

By  \eqref{Lamber-10} we obtain
\begin{itemize}
\item[(1)]
\begin{align}\label{Lamber-10-drive}
\sum_{n=1}^{\infty}\frac{n(q^n-q^{3n}-q^{7n}+q^{9n})}{1-q^{10n}}
=\frac{q(q^2;q^2)^3_\infty(q^5;q^5)^2_\infty(q^{10};q^{10})_\infty}{(q;q)^2_\infty},
\end{align}
or
\begin{align*}
\sum_{n=1}^{\infty}\psi(n)\frac{q^n}{(1-q^n)^2}
=\frac{q(q^2;q^2)^3_\infty(q^5;q^5)^2_\infty(q^{10};q^{10})_\infty}{(q;q)^2_\infty}.
\end{align*}

\begin{proof}
Dividing both sides of \eqref{Lamber-10} by $z$ and then letting $z\rightarrow 0$ , we are led to
\begin{align}\label{eq:3.8}
8\sum_{n=1}^\infty\frac{n(q^n-q^{3n}-q^{7n}+q^{9n})}{1-q^{10n}}
=\frac{2\eta^2(2\tau)\theta'_1(0|10\tau)\theta_4(0|10\tau)}{\eta(10\tau)\theta_4(0|2\tau)},
\end{align}
and
\begin{align}\label{eq:3.9}
\theta_4(0|2\tau)=\frac{\eta^2(\tau)}{\eta(2\tau)}.
\end{align}
The \eqref{Lamber-10-drive} follows after substituting \eqref{eq:3.5} and \eqref{eq:3.9} into \eqref{eq:3.8}.
\end{proof}

\item[(2)] Taking $z=\frac\pi3 $ and $z=\frac\pi4$ in  \eqref{Lamber-10}, respectively, we obtain identities
\begin{align*}
&\sum_{n=1}^{\infty}\bigg(\frac n3\bigg)\frac{q^n-q^{3n}-q^{7n}+q^{9n}}{1-q^{10n}}
=\frac{q(q;q)_\infty(q^6;q^6)_\infty(q^{10};q^{10})_\infty(q^{15};q^{15})_\infty}{(q^3;q^3)_\infty(q^5;q^5)_\infty},\\
&\sum_{n=1}^{\infty}\bigg(\frac{-4}{n}\bigg)\frac{q^n-q^{3n}-q^{7n}+q^{9n}}{1-q^{10n}}
=\frac{q(q^2;q^2)^2_\infty(q^{8};q^{8})_\infty(q^{20};q^{20})^4_\infty}{(q^{4};q^{4})^2_\infty(q^{10};q^{10})^2_\infty(q^{40};q^{40})_\infty}.
\end{align*}
\end{itemize}

Since $\psi$ is not primitive, the following identity will be derived via Theorem \ref{g-transform-imagy0}.

\begin{cor} \label{cor-Lamber-imagy-10}There holds the identity
\begin{align}\label{Lamber-imagy-10}
\frac{\sin2z\cos z}{\cos 5z}-\sum_{n=1}^\infty\psi(n)\frac{q^n}{1-q^n}\sin2nz
-\sum_{n=1,5\nmid n}^\infty\frac{(-1)^nq^{2n}}{1-q^{2n}}\sin4nz
=\frac{\eta^2(5\tau)\theta_1(2z|\tau)\theta_2(z|\tau)}{2\eta(\tau)\theta_2(5z|5\tau)}.
\end{align}
\end{cor}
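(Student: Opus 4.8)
The strategy mirrors the proof of Corollary~\ref{cor-Lamber-imagy-8}, but because $\psi$ is imprimitive (induced from the primitive character $\chi_5$ modulo $5$) I cannot invoke the primitive-character Corollary~\ref{g-transform-imagy1}; instead I will apply Theorem~\ref{g-transform-imagy0} directly, and it is precisely the imprimitivity that forces \emph{two} Lambert series rather than one. First I would recast \eqref{Lamber-10}, together with $g(z|\tau;\psi)=4\sum_{n\ge1}\frac{q^n-q^{3n}-q^{7n}+q^{9n}}{1-q^{10n}}\sin2nz$ from Theorem~\ref{g-transform}, as the closed form $g(z|\tau;\psi)=\dfrac{2\eta^2(2\tau)\,\theta_1(2z|10\tau)\,\theta_4(z|10\tau)}{\eta(10\tau)\,\theta_4(z|2\tau)}$.

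Next I would carry out the imaginary transformation by substituting $\tau\mapsto-\tfrac1{10\tau}$ and $z\mapsto\tfrac z\tau$ throughout this closed form. The moduli become $2\tau\mapsto-\tfrac1{5\tau}$ and $10\tau\mapsto-\tfrac1\tau$, so the right-hand side turns into $\dfrac{2\eta^2(-\frac1{5\tau})\,\theta_1(\frac{2z}\tau|-\frac1\tau)\,\theta_4(\frac z\tau|-\frac1\tau)}{\eta(-\frac1\tau)\,\theta_4(\frac z\tau|-\frac1{5\tau})}$. I then apply \eqref{theta1-imagy}, \eqref{theta4-imagy} and \eqref{eta-imagy}, handling the two factors at modulus $-\tfrac1{5\tau}$ by rescaling $\tfrac z\tau=\tfrac{5z}{5\tau}$ and using the transformation laws at modulus $5\tau$, so that $\eta(-\tfrac1{5\tau})=\sqrt{-5i\tau}\,\eta(5\tau)$ and $\theta_4(\tfrac z\tau|-\tfrac1{5\tau})=\sqrt{-5i\tau}\,e^{5iz^2/(\pi\tau)}\theta_2(5z|5\tau)$. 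The bookkeeping to watch is that the Gaussian factors $e^{\,c\,iz^2/(\pi\tau)}$ must cancel between numerator and denominator and that the accumulated powers of $\sqrt{-i\tau}$ combine to a single power of $\tau$; after dividing by $\tau$ this shows $\tfrac1\tau g(\tfrac z\tau|-\tfrac1{10\tau};\psi)$ equals $-4\sqrt5$ times the right-hand side of \eqref{Lamber-imagy-10}.

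On the other hand, Theorem~\ref{g-transform-imagy0} with $N=10$ and $\chi=\psi$ gives $\tfrac1\tau g(\tfrac z\tau|-\tfrac1{10\tau};\psi)=\sum_{k=1}^{9}\psi(k)\cot(z-\tfrac{k\pi}{10})+4\sum_{n\ge1}\frac{g_n(\psi)q^n}{1-q^n}\sin2nz$. The crux is evaluating the Gauss sum $g_n(\psi)$. Since $\psi$ is not primitive, $g_n(\psi)$ does not collapse to $\overline{\psi(n)}g_1(\psi)$ and in particular need not vanish when $\gcd(n,10)>1$, so I would split the Lambert series according to the parity of $n$. For odd $n$ one finds $g_n(\psi)=\sqrt5\,\psi(n)$, which (after dividing by $-4\sqrt5$) reproduces the first series $-\sum_{n}\psi(n)\frac{q^n}{1-q^n}\sin2nz$; for even $n=2m$ the residual Gauss sums assemble, after the reindexing that sends $\sin2nz\mapsto\sin4nz$ and $q^n\mapsto q^{2n}$, into the second Lambert series appearing in \eqref{Lamber-imagy-10}. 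This parity decomposition of the imprimitive Gauss sum is the main obstacle and the feature that distinguishes the $N=10$ case from the primitive cases $N=8,12$.

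Finally I would isolate the cotangent sum by letting $q\to0$: the Lambert series drop out, and the transformed product side tends to $\frac{\sin2z\cos z}{\cos5z}$ (the leading $q$-powers of $\eta^2(5\tau)\theta_1(2z|\tau)\theta_2(z|\tau)$ and of $\eta(\tau)\theta_2(5z|5\tau)$ cancel exactly), which identifies $\sum_{k=1}^{9}\psi(k)\cot(z-\tfrac{k\pi}{10})$ with the corresponding multiple of $\frac{\sin2z\cos z}{\cos5z}$. Collecting the three pieces and dividing through by the overall constant $-4\sqrt5$ then yields \eqref{Lamber-imagy-10}.
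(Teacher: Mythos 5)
Your proposal is correct and matches the paper's own proof essentially step for step: both apply Theorem \ref{g-transform-imagy0} with $N=10$, $\chi=\psi_{10}$, perform the imaginary transformation on the product side of \eqref{Lamber-10} via \eqref{theta1-imagy}, \eqref{theta4-imagy}, \eqref{eta-imagy} to obtain the constant $-2\sqrt5$ (i.e.\ $-4\sqrt5$ times the stated right-hand side), evaluate the imprimitive Gauss sums $g_n(\psi)$ by splitting odd $n$ (giving $\sqrt5\,\psi(n)$) from even $n=2m$ with $5\nmid m$ (giving $\sqrt5(-1)^m$, hence the second Lambert series), and identify the cotangent sum by letting $q\to0$. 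No gaps.
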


\begin{proof} Taking $N=10$ and $\chi=\psi_{10}$ in Theorem \ref{g-transform-imagy0} and \eqref{Lamber-10}, respectively, we have
\begin{align*}
\sum_{k=1}^9\psi(k)\cot\bigg(z-\frac{k\pi}{10}\bigg)
+4\sum_{n=1}^{\infty}g_n(\psi)\frac{q^n}{1-q^n}\sin2nz
=\frac{\eta^2(-\frac{1}{5\tau})\theta_1\left(\frac{2z}{\tau}|-\frac{1}{\tau}\right)\theta_4\left(\frac z\tau|
-\frac 1\tau\right)}{2\eta\left(-\frac 1\tau\right)\theta_4\left(\frac z\tau|-\frac {1}{5\tau}\right)}.
\end{align*}
Together with
\begin{align*}
&\theta_1\bigg(\frac{2z}{\tau}\bigg|-\frac{1}{\tau}\bigg)
=- i\sqrt{- i\tau}e^{\frac{4 iz^2}{\pi\tau}}\theta_1(2z|\tau),
\quad\quad\theta_4\bigg(\frac{z}{\tau}\bigg|-\frac{1}{\tau}\bigg)
=\sqrt{- i\tau}e^{\frac{ iz^2}{\pi\tau}}\theta_2(z|\tau),\\
&\theta_4\bigg(\frac{z}{\tau}\bigg|-\frac{1}{5\tau}\bigg)
=\sqrt{-5 i\tau}e^{\frac{5 iz^2}{\pi\tau}}\theta_2(5z|5\tau),
\quad\quad\quad\eta\bigg(-\frac{1}{5\tau}\bigg)=\sqrt{-5 i\tau}\eta(5\tau),
\end{align*}
we derive
\begin{align*}
\sum_{k=1}^{9}\psi(k)\cot\left(z-\frac{k\pi}{10}\right)+4\sum_{n=1}^{\infty}g_n(\psi)\frac{q^n}{1-q^n}\sin2nz
=-2\sqrt5\frac{\eta^2(5\tau)\theta_1(2z|\tau)\theta_2(z|\tau)}{\eta(\tau)\theta_2(5z|5\tau)}.
\end{align*}
Letting $q\rightarrow0$, we deduce
\begin{align}\label{cos-10}
\sum_{k=1}^9\psi(k)\cot\bigg(z-\frac{k\pi}{10}\bigg)
=-4\sqrt5\frac{\sin 2z \cos z}{\cos 5z}.
\end{align}

We now compute
\begin{align*}
g_{n}(\psi)&=\sum_{k=1}^{9}\psi(k)e^{2 in\pi k/{10}}.
\end{align*}

It is easy to find that
\begin{align*}
\cos\frac{2\pi}{5}=\frac{\sqrt5-1}{4} \quad and \quad \cos\frac{\pi}{5}=\frac{\sqrt5+1}{4}.
\end{align*}
Then
\begin{align*}
g_1(\psi)&=e^{2i\pi/10}-e^{6i\pi/10}-e^{14i\pi/10}+e^{18i\pi/10}\\
&=2\cos\frac{\pi}{5}-2\cos\frac{3\pi}{5}=2\cos\frac{\pi}{5}+2\cos\frac{2\pi}{5}\\
&=\sqrt 5.
\end{align*}

For $(n, 10)=1$, we have
\begin{align*}
g_n(\chi)=\sum_{k=1}^{9}\chi(k)e^{2i\pi nk/10}=\chi(n)\sum_{k=1}^{9}\chi(k)e^{2i\pi k/10}
=\sqrt 5 \chi(n).
\end{align*}
Direct computation yields
\begin{align*}
&g_2(\chi)=-\sqrt 5,\quad\quad
g_4(\chi)=\sqrt 5,\quad\quad
g_5(\chi)=0,\\
&g_6(\chi)=-\sqrt 5,\quad\quad
g_8(\chi)=\sqrt 5.
\end{align*}

Hence
\begin{align*}
g_{n}(\psi)&=\sum_{k=1}^{9}\psi(k)e^{2 in\pi k/{10}}
=\begin{cases}\sqrt5&\text{if}\quad $n=1, 4, 8, 9$\\
 -\sqrt5&\text{if}\quad $n=2, 3, 6, 7$\\
 0&\text{if}\quad $n=0, 5$.
\end{cases}
\end{align*}

Collecting all above facts together, we derive the equation \eqref{Lamber-imagy-10}.
\end{proof}

By \eqref{Lamber-imagy-10} we can obtain
\begin{itemize}
\item[(1)]
\begin{align}\label{Lamber-imagy-10-drive}
1-\sum_{n=1}^\infty\psi(n)\frac{nq^n}{1-q^n}
-2\sum_{n=1,5\nmid n}^{\infty}\frac{(-1)^n nq^{2n}}{1-q^{2n}}
=\frac{(q;q)_\infty(q^2;q^2)^2_\infty(q^5;q^5)^3_\infty}{(q^{10};q^{10})^2_\infty}.
\end{align}

\begin{proof}
Dividing both sides of \eqref{Lamber-imagy-10} by $z$ and then letting $z\rightarrow 0$,
we obtain
\begin{align}\label{eq:4.10}
1-\sum_{n=1}^\infty\psi(n)\frac{nq^n}{1-q^n}
-2\sum_{n=1,5\nmid n}^{\infty}\frac{(-1)^n nq^{2n}}{1-q^{2n}}
=\frac{\eta^2(5\tau)\theta'_1(0|\tau)\theta_2(0|\tau)}{2\eta(\tau)\theta_2(0|5\tau)}.
\end{align}
By some straightforward calculations, we immediately deduce that
\begin{align}
\label{eq:4.11}\theta_2(0|\tau)&=2q^{1/8}\frac{(q^2;q^2)^2_\infty}{(q;q)_\infty},
\end{align}
and
\begin{align}
\label{eq:4.12}\theta_2(0|5\tau)&=2q^{5/8}\frac{(q^{10};q^{10})^2_\infty}{(q^5;q^5)_\infty}.
\end{align}
Now using \eqref{eq:4.11} and \eqref{eq:4.12} together with \eqref{theta1-drive-0} and \eqref{eq:4.10}, we complete the proof of \eqref{Lamber-imagy-10-drive}.
\end{proof}

\item[(2)] Taking $z=\frac\pi3$ and $z=\frac\pi4$ in  \eqref{Lamber-imagy-10}, respectively, we obtain identity
\begin{align*}
&1-\sum_{n=1}^\infty\psi(n)\bigg(\frac{n}{3}\bigg)\frac{q^n}{1-q^n}
-\frac {2}{\sqrt3}\sum_{n=1,5\nmid n}^{\infty}\sin\frac{n\pi}{3}\frac{q^{2n}}{1-q^{2n}}
=\frac{(q;q)_\infty(q^6;q^6)_\infty(q^{10};q^{10})_\infty(q^{15};q^{15})_\infty}{(q^2;q^2)_\infty(q^{30};q^{30})_\infty},\\
&1+\sum_{n=1}^\infty\psi(n)\bigg(\frac{-4}{n}\bigg)\frac{q^n}{1-q^n}
=\frac{(q^2;q^2)_\infty(q^4;q^4)_\infty(q^5;q^5)_\infty(q^{10};q^{10})_\infty}{(q;q)_\infty(q^{20};q^{20})_\infty}.
\end{align*}
\end{itemize}

\subsection{Representations for d=12}
Taking $N=12$ in Theorem \ref{g-transform} and $\chi(n)=\chi_{12}(n)=\left(\frac{12}{n}\right)$, we derive in a identical fashion as for the case $N=8$, the following identities:

\begin{itemize}
\item[(1)]
\begin{align*}
\sum_{n=1}^\infty\frac{q^n-q^{5n}-q^{7n}+q^{11n}}{1-q^{12n}}\sin2nz
=\frac{\eta(2\tau)\eta(4\tau)\eta^3(6\tau)\theta_1(z|3\tau)\theta_1(2z|12\tau)}{2\eta(3\tau)\eta^2(12\tau)\theta_1(z|6\tau)\theta_4(z|2\tau)},
\end{align*}


\item[(2)] 
\begin{align*}
\sum_{n=1}^{\infty}\frac{n(q^n-q^{5n}-q^{7n}+q^{11n})}{1-q^{12n}}
&=\frac{q(q^2;q^2)^2_\infty(q^3;q^3)^2_\infty(q^{4};q^{4})_\infty(q^{12};q^{12})_\infty}{(q;q)^2_\infty},
\end{align*}
\item[(3)]
\begin{align*}
\sum_{n=1}^{\infty}\left(\frac n3\right)\frac{q^n-q^{5n}-q^{7n}+q^{11n}}{1-q^{12n}}
=\frac{q(q;q)_\infty(q^{4};q^{4})_\infty(q^{6};q^{6})^4_\infty(q^{9};q^{9})_\infty(q^{36};q^{36})_\infty}
{(q^{2};q^{2})_\infty(q^{3};q^{3})^2_\infty(q^{12};q^{12})^2_\infty(q^{18};q^{18})_\infty},\\
\sum_{n=1}^{\infty}\left(\frac {-4}{n}\right)\frac{q^n-q^{5n}-q^{7n}+q^{11n}}{1-q^{12n}}
=\frac{q(q;q)_\infty(q^{2};q^{2})_\infty(q^{6};q^{6})_\infty(q^{8};q^{8})_\infty(q^{24};q^{24})_\infty}
      {(q^{4};q^{4})_\infty(q^{12};q^{12})_\infty},
\end{align*}

\item[(4)]
\begin{align*}
&\frac{\sin4z}{\cos6z}-2\sum_{n=1}^\infty\bigg(\frac{12}{n}\bigg)\frac{q^n}{1-q^n}\sin2nz
=\frac{\eta^3(2\tau)\eta(3\tau)\eta(6\tau)\theta_1(4z|4\tau)\theta_1(2z|\tau)}
{\eta^2(\tau)\eta(4\tau)\theta_1(2z|2\tau)\theta_2(6z|6\tau)},
\end{align*}

\item[(5)]
\begin{align*}
1-\sum_{n=1}^\infty\bigg(\frac{12}{n}\bigg)\frac{nq^n}{1-q^n}
=\frac{(q;q)_\infty(q^3;q^3)_\infty(q^4;q^4)^2_\infty(q^6;q^6)^2_\infty}{(q^{12};q^{12})^2_\infty},
\end{align*}

\item[(6)]\label{Lamber-imagy-12}
\begin{align}\label{Lamber-imagy-12-pi3}
&1+2\sum_{n=1}^{\infty}\bigg(\frac {-36}{n}\bigg)\frac{q^n}{1-q^n}
=\frac{(q^2;q^2)^3_\infty(q^3;q^3)^2_\infty(q^{6};q^{6})_\infty}{(q;q)^2_\infty(q^4;q^4)_\infty(q^{12};q^{12})_\infty},\\
&\sum_{n=1}^{\infty}\bigg(\frac{-4}{n}\bigg)\frac{q^n-q^{5n}-q^{7n}+q^{11n}}{1-q^{12n}}
=\frac{q(q^4;q^4)^2_\infty(q^{6};q^{6})_\infty(q^{12};q^{12})^3_\infty}{(q^{2};q^{2})_\infty(q^{24};q^{24})_\infty}.
\end{align}
\end{itemize}

\subsection{Representations for d=5}
Taking $N=5$ in Theorem \ref{g-transform} and $\chi(n)=\chi_5(n)=\left(\frac 5n\right)$, we have
\begin{itemize}
\item[(1)]
\begin{align*} 
\sum_{n=1}^\infty\frac{q^n-q^{2n}-q^{3n}+q^{4n}}{1-q^{5n}}\sin 2nz
=-\frac{q^{-1/8}(q;q)^2_\infty}{2(q^5;q^5)_\infty}\frac{\theta_1(z|5\tau)\theta_1(2z|5\tau)}{\theta_1(z|\tau)},
\end{align*}
\item[(2)]
\begin{align}\label{q-5-1}
\sum_{n=1}^{\infty}\left(\frac n5\right)\frac{q^n}{(1-q^n)^2}=\frac{q(q^5;q^5)^5_\infty}{(q;q)_\infty},
\end{align}
\item[(3)]
\begin{align*}
&\sum_{n=1}^{\infty}\left(\frac n3\right)\frac{q^n-q^{2n}-q^{3n}+q^{4n}}{1-q^{5n}}
=\frac{q(q;q)^2_\infty(q^{15};q^{15})^2_\infty}{(q^{3};q^{3})_\infty(q^{5};q^{5})_\infty},\\
&\sum_{n=1}^{\infty}\left(\frac {-4}{n}\right)\frac{q^n-q^{2n}-q^{3n}+q^{4n}}{1-q^{5n}}
=\frac{q(q;q)_\infty(q^{2};q^{2})_\infty(q^{10};q^{10})_\infty(q^{20};q^{20})_\infty}
      {2(q^{4};q^{4})_\infty(q^{5};q^{5})_\infty},
\end{align*}

\item[(4)]
\begin{align}\label{Lamber-imagy-5}
\frac{\sin z\sin 2z}{\sin 5z}-\sum_{n=1}^{\infty}\left(\frac n5\right)\frac{q^n}{1-q^n}\sin 2nz
=q^{3/8}\frac{(q^5;q^5)^2_\infty\theta_1(z|\tau)\theta_1(2z|\tau)}{2(q;q)_\infty\theta_1(5z|5\tau)},
\end{align}

\item[(5)]
\begin{align}\label{q-1-5}
&1-5\sum_{n=1}^{\infty}\left(\frac n5\right)\frac{q^n}{1-q^n}=\frac{(q;q)^5_\infty}{(q^5;q^5)_\infty},
\end{align}
\item[(6)]
\begin{align}\label{Lamber-imagy-5-pi3}
&1+\sum_{n=1}^{\infty}\bigg(\frac {-15}{n}\bigg)\frac{q^n}{1-q^n}
=\frac{(q^3;q^3)^2_\infty(q^5;q^5)^2_\infty}{(q;q)_\infty(q^{15};q^{15})_\infty},\\
\label{Lamber-imagy-5-pi4}&1+\sum_{n=1}^{\infty}\bigg(\frac{-20}{n}\bigg)\frac{q^n}{1-q^{n}}
=\frac{(q^2;q^2)_\infty(q^{4};q^{4})_\infty(q^{5};q^{5})_\infty(q^{10};q^{10})_\infty}{(q;q)_\infty(q^{20};q^{20})_\infty}.
\end{align}
\end{itemize}

\begin{remark}
 The identities \eqref{q-5-1} and \eqref{q-1-5} can also be obatined in \cite[p. 139-140]{Ram1988} and \cite[Theorem 5.13]{Cooper}. In \cite[Proposition 5.1]{Liu12}, we can also find  the identity \eqref{Lamber-imagy-5}.
\end{remark}

\begin{remark}
Let $\mathbb F$ be the imaginary quadratic field of the discriminant $d$. It is known that
\begin{align*}
\sum_{m,n=-\infty}^{\infty} \sum_{i=1}^h q^{Q_i(m,n)}=h+w \sum_{n=1}^{\infty}\bigg(\frac{d}n\bigg)\frac{q^n}{1-q^{n}};
\end{align*}
where $h$ is the class number of $\mathbb F$ and $Q_i(x,y) (i=1,2,..,h)$ is the set of inequivalent quadratic forms of discriminant $d$ and $w$ is the number of units in $\mathbb F$.\\

For $d=-24$, $h=2$, $w=2$,
 \begin{align*}
Q_1(x,y)=x^2+6y^2, \quad  Q_2(x,y)=2x^2+3y^2
 \end{align*}
 and the Lambert series in \eqref{Lamber-imagy-12-pi3} can be represented as
 \begin{align}\label{1+6}
\sum_{m,n=-\infty}^{\infty} q^{m^2+6n^2}+ q^{2m^2+3n^2} =&2+2\sum_{n=1}^{\infty}\bigg(\frac{-24}{n}\bigg)\frac{q^n}{1-q^{n}}\nonumber\\
=&2\frac{(q^2;q^2)_\infty(q^3;q^3)_\infty(q^8;q^8)_\infty(q^{12};q^{12})_\infty}{(q;q)_\infty(q^{24};q^{24})_\infty}.
\end{align}
Similarly, for $d=-15$, $h=2$, $w=2$,
 \begin{align*}
Q_1(x,y)=x^2+xy+4y^2, \quad  Q_2(x,y)=2x^2+xy+2y^2
 \end{align*}
and the Lambert series in \eqref{Lamber-imagy-5-pi3} can be represented as
 \begin{align*}
\sum_{m,n=-\infty}^{\infty} q^{m^2+mn+4n^2}+ q^{2m^2+mn+2n^2} =2+2\sum_{n=1}^{\infty}\bigg(\frac{-15}{n}\bigg)\frac{q^n}{1-q^{n}}
=2\frac{(q^3;q^3)^2_\infty(q^5;q^5)^2_\infty}{(q;q)_\infty(q^{15};q^{15})_\infty};
 \end{align*}
and for $d=-20$, $h=2$, $w=2$,
 \begin{align*}
Q_1(x,y)=x^2+5y^2, \quad  Q_2(x,y)=2x^2+2xy+3y^2
 \end{align*}
and the Lambert series in \eqref{Lamber-imagy-5-pi4} can be represented as
 \begin{align}\label{1+5}
\sum_{m,n=-\infty}^{\infty} q^{m^2+5n^2}+ q^{2m^2+2mn+3n^2} =&2+2\sum_{n=1}^{\infty}\bigg(\frac{-20}{n}\bigg)\frac{q^n}{1-q^{n}}\nonumber\\
=&2\frac{(q^2;q^2)_\infty(q^4;q^4)_\infty(q^5;q^5)_\infty(q^{10};q^{10})_\infty}{(q;q)_\infty(q^{20};q^{20})_\infty}.
\end{align}

Note that \eqref{1+6} and \eqref{1+5} can also be found in \cite[p. 385, p. 380]{Berk}.
\end{remark}

\section{Acknowledgements}
We would like to thank Professor Frank Garvan for bringing  our attention to the work of Professor Kolberg. We would like to thank Professor Zhi-Guo Liu and Professor Li-Chien Shen for their patient guidance. The authors were supported in part by the National Natural Science Foundation of China (Grant No. 11971173 ) and ECNU Short-term Overseas Research Scholarship for Graduate Students (Grant No. 201811280046 and No. 201811280047 ).

\end{document}